\newtheorem{thm}{Theorem}[section]
\newtheorem{lmm}[thm]{Lemma}
\newtheorem{cor}[thm]{Corollary}
\newtheorem{prop}[thm]{Proposition}
\newtheorem{defn}[thm]{Definition}
\newcommand{\cc}{\mathbb{C}}
\newcommand{\cov}{\mathrm{Cov}}
\newcommand{\ee}{\mathbb{E}}
\newcommand{\ii}{\mathbb{I}}
\newcommand{\mi}{\mathcal{I}}
\newcommand{\ml}{\mathcal{L}}
\newcommand{\pp}{\mathbb{P}}
\newcommand{\ra}{\rightarrow}
\newcommand{\rr}{\mathbb{R}}
\newcommand{\st}{\sqrt{t}}
\newcommand{\sst}{\sqrt{1-t}}
\newcommand{\tr}{\operatorname{Tr}}
\newcommand{\var}{\mathrm{Var}}
\newcommand{\vp}{\varphi^\prime}
\newcommand{\fpar}[2]{\frac{\partial #1}{\partial #2}}
\newcommand{\spar}[2]{\frac{\partial^2 #1}{\partial #2^2}}
\newcommand{\mpar}[3]{\frac{\partial^2 #1}{\partial #2 \partial #3}}
\newcommand{\real}{\operatorname{Re}}
\newcommand{\imag}{\operatorname{Im}}
\begin{document}
\title[Fluctuations of eigenvalues]{Fluctuations of eigenvalues and second order Poincar\'e inequalities}
\author{Sourav Chatterjee}
\address{\newline367 Evans Hall \#3860\newline
Department of Statistics\newline
University of California at Berkeley\newline
Berkeley, CA 94720-3860\newline
{\it E-mail: \tt sourav@stat.berkeley.edu}\newline 
{\it URL: \tt http://www.stat.berkeley.edu/$\sim$sourav}
}
\subjclass[2000]{60F05, 15A52}
\keywords{Central limit theorem, random matrices, linear statistics of eigenvalues, Poincar\'e inequality, Wigner matrix, Wishart matrix, Toeplitz matrix}
\thanks{The author's research was partially supported by NSF grant DMS-0707054 and a Sloan Research Fellowship.}
\maketitle

\begin{abstract}
Linear statistics of eigenvalues in many familiar classes of random matrices are known to obey gaussian central limit theorems. The proofs of such results are usually rather difficult, involving hard computations specific to the model in question. In this article we attempt to formulate a unified technique for deriving such results via relatively soft arguments. In the process, we introduce a notion of `second order Poincar\'e inequalities':  just as ordinary Poincar\'e inequalities give variance bounds, second order Poincar\'e inequalities give central limit theorems. The proof of the main result employs Stein's method of normal approximation. A number of examples are worked out, some of which are new. One of the new results is a CLT for the spectrum of gaussian Toeplitz matrices. 
\end{abstract}



\section{Introduction}\label{intro}
Suppose $A_n$ is an $n\times n$ matrix with real or complex entries and eigenvalues $\lambda_1,\ldots,\lambda_n$, repeated by multiplicities. A linear statistic of the eigenvalues of $A_n$ is a function of the form $\sum_{i=1}^n f(\lambda_i)$, where $f$ is some fixed function.
Central limit theorems for linear statistics of eigenvalues of large dimensional random matrices have received considerable attention in recent years. A very curious feature that makes these results unusual and interesting is that they usually {\it do not require normalization}, i.e.\ one does not have to divide by~$\sqrt{n}$; only centering is enough. Moreover, they have important applications in statistics and other applied areas (see e.g.\ the recent survey by Johnstone~\cite{johnstone06}).

The literature around the topic is quite large. To the best of our knowledge, the investigation of central limit theorems for linear statistics of eigenvalues of large dimensional random matrices began with the work of Jonsson~\cite{jonsson82} on Wishart matrices. The key idea is to express $\sum \lambda_i^k$ as
\[
\sum \lambda_i^k = \tr(A_n^k) = \sum_{i_1,i_2,\ldots,i_k} a_{i_1i_2}a_{i_2i_3}\cdots a_{i_{k-1}i_k}a_{i_ki_1},
\]
where $A_n$ is an $n \times n$ Wishart matrix, and then apply the method of moments to show that this is gaussian in the large $n$ limit. In fact, Jonsson proves the joint convergence of the law of $(\tr (A_n), \tr (A_n^2),\ldots,\tr(A_n^p))$ to a multivariate normal distribution (where $p$ is fixed). 

A similar study for Wigner matrices was carried out by Sina\u\i \ and Soshnikov \cite{sinaisoshnikov98, sinaisoshnikov98b}. A deep and difficult aspect of the Sina\u\i -Soshnikov results is that they get central limit theorems for $\tr(A_n^{p_n})$, where $p_n$ is allowed to grow at the rate $o(n^{2/3})$, instead of remaining fixed. They also get CLTs for $\tr(f(A_n))$ for analytic $f$.


Incidentally, for gaussian Wigner matrices, the best available results are due to Johansson~\cite{johansson98}, who characterized a large (but not exhaustive) class of functions for which the CLT holds. In fact, Johansson proved a general result for linear statistics of eigenvalues of random matrices whose entries have a joint density with respect to Lebesgue measure of the form
$Z_n^{-1}\exp(-n\tr V(A))$, 
where $V$ is a polynomial function and $Z_n$ is the normalizing constant. These models are widely studied in the physics literature. Johansson's proof relies on a delicate analysis of the joint density of the eigenvalues, which is explicitly known for this class of matrices.

Another important contribution is the work of Diaconis and Evans \cite{diaconisevans01}, who proved similar results for random unitary matrices. Again, the basic approach relies on the method of moments, but the computations require new ideas because of the lack of independence between the matrix entries. However, as shown in \cite{diaconisshahshahani94, diaconisevans01}, strikingly exact computations are possible in this case by  invoking some deep connections between symmetric function theory and the unitary group.


An alternative approach, based on Stieltjes transforms, has been developed in Bai and Yao \cite{baiyao03} and Bai and Silverstein \cite{baisilverstein04}. This approach has its roots in the semi-rigorous works of Girko \cite{girko90} and Khorunzhy, Khoruzhenko, and Pastur \cite{kkp96}.

Yet another line of attack, via stochastic calculus, was initiated in the work of Cabanal-Duvillard \cite{cabanalduvillard01}. The ideas were used by Guionnet \cite{guionnet02} to prove central limit theorems for certain band matrix models. Far reaching results for a very general class of band matrix models were later obtained using combinatorial techniques by Anderson and Zeitouni \cite{andersonzeitouni06a}.

Other influential ideas, sometimes at varying levels of rigor, come from the papers of Costin and Lebowitz \cite{costinlebowitz95}, Boutet de Monvel, Pastur and Shcherbina \cite{boutet95}, Johansson \cite{johansson97}, Keating and Snaith \cite{keatingsnaith00}, Hughes et.~al.~\cite{hughesetal00}, Soshnikov \cite{soshnikov02}, Israelson \cite{israelson01} and Wieand~\cite{wieand02}. The recent works of Anderson and Zeitouni \cite{andersonzeitouni06}, Dumitriu and Edelman \cite{dumitriuedelman06}, Rider and Silverstein \cite{ridersilverstein06}, Rider and Vir\'ag~\cite{ridervirag06}, Jiang \cite{jiang06}, and Hachem et.\ al.\ \cite{hachem06, hachem07} provide several illuminating insights and new results. The recent advances in the theory of second order freeness (introduced by Mingo and Speicher \cite{mingospeicher06}) are also of great interest.


In this paper we introduce a result (Theorem \ref{matrix}) that may provide a unified `soft tool' for matrices that can be easily expressed as smooth functions of independent random variables. The tool is soft in the sense that we only need to calculate various upper and lower bounds rather than perform exact computations of limits as required for existing methods. 
(In this context, it should be noted that soft arguments are possible even in the combinatorial techniques, if one works with cumulants instead of moments, e.g.\ as in \cite{andersonzeitouni06a}, Lemma 4.10). 

We demonstrate the scope of our approach with applications to generalized Wigner matrices, gaussian matrices with arbitrary correlation structure, gaussian Toeplitz matrices, Wishart matrices, and double Wishart matrices. 

\subsection{The intuitive idea}
Let us now briefly describe the main idea. Suppose $X = (X_1,\ldots,X_n)$ is a vector of independent standard gaussian random variables, and $g:\rr^n \ra \rr$ is a smooth function. Let $\nabla g$ denote the gradient of $g$. We know that if $\|\nabla g(X)\|$ is typically small, then $g(X)$ has small fluctuations. In fact, the gaussian Poincar\'e inequality says that
\begin{equation}\label{poincareineq}
\var(g(X)) \le \ee\|\nabla g(X)\|^2.
\end{equation}
Thus, the size of $\nabla g$ controls the variance of $g(X)$. Based on this, consider the following speculation: Is it possible to extend the Poincar\'e inequality to the `second order', as a method of determining whether $g(X)$ is approximately gaussian by inspecting the behavior of the second order derivatives~of~$g$? 


The speculation turns out to be correct (and useful for random matrices), although in a rather mysterious way. The following example is representative of a general phenomenon.

Suppose $B$ is a fixed $n \times n$ real symmetric matrix, and the function $g:\rr^n \ra \rr$ is defined as 
\[
g(x) = x^t Bx,
\]
where $x^t$ denotes the transpose of the vector $x$.
Let $X = (X_1,\ldots,X_n)$ be a vector of independent standard gaussian random variables, and let us ask the question ``When is $g(X)$ approximately gaussian?''. 

Now, if $\lambda_1, \lambda_2, \ldots, \lambda_n$ are the eigenvalues of $B$ with corresponding eigenvectors $u_1,u_2,\ldots,u_n$, then 
\[
g(X) = \sum_{i=1}^n \lambda_i Y_i^2,
\]
where $Y_i = u_i^t X$. Since we can assume without loss of generality that $u_1,\ldots,u_n$ are mutually orthogonal, therefore $Y_1,\ldots, Y_n$ are again i.i.d.\ standard gaussian. This seems to suggest that $g(X)$ is approximately gaussian if and only if `no eigenvalue dominates in the sum'. In fact, one can show that $g(X)$ is approximately gaussian if and only if
\[
\max_i |\lambda_i|^2 \ll \sum_i \lambda_i^2.
\]
Now $\nabla^2 g (x) \equiv 2B$, where $\nabla^2 g$ denotes the Hessian matrix of $g$. Thus, the question about the gaussianity of $g(X)$ can be reduced to a question about the negligibility of the operator norm squared of $\nabla^2 g(X)$ $( = 2\max |\lambda_i|^2)$ in comparison to the variance of $g(X)$ $( = 2\sum \lambda_i^2)$.

In Theorem~\ref{poincare1} we generalize this notion to show that for any smooth $g$, $g(X)$ is approximately gaussian whenever {\it the typical size of the operator norm squared of $\nabla^2g(X)$ is small compared to $\var (g(X))$}, and a few other conditions are satisfied. An outline of the rigorous proof is given in the next subsection.

The idea is applied to random matrices as follows. We consider random matrices that can be easily expressed as functions of independent random variables, and think of the linear statistics of eigenvalues as functions of these independent variables. The setup can be pictorially represented as
\[
\text{large vector } X \ra \text{ matrix } A(X) \ra \text{ linear statistic ${\textstyle\sum_i} f(\lambda_i)$} =: g(X).
\]
The main challenge is to evaluate the second order partial derivatives of $g$. However, our task is simplified (and the argument is `soft') because we only need bounds and not exact computations. Still, a considerable amount of bookkeeping is involved. We  provide a `finished product' in Theorem \ref{matrix} for the convenience of potential future users of the method. 

A discrete version of this idea is investigated in the author's earlier paper~\cite{chatterjeenormal1}. However, no familiarity with~\cite{chatterjeenormal1} is required here.

\subsection{Outline of the proof via Stein's method} The argument for general $g$ is not as intuitive as for quadratic forms. It begins with Stein's method~\cite{stein72, stein86}: If a random variable $W$ satisfies $\ee(\varphi(W)W) \approx \ee(\varphi'(W))$ for a large class of functions $\varphi$, then $W$ is approximately standard gaussian. The idea stems from the fact that if $W$ is {\it exactly} standard gaussian, then $\ee(\varphi(W)W) = \ee(\varphi'(W))$ for all absolutely continuous $\varphi$ for which both sides are well defined.
Stein's lemma (Lemma~\ref{steins} in this paper) makes this precise with error bounds.

Now suppose we are given a random variable $W$, and there is a function $h$ such that for all a.c.\ $\varphi$,
\begin{equation}\label{hdef}
\ee(\varphi(W)W) = \ee(\varphi'(W) h(W)).
\end{equation}
For example, if $W$ has a density $\rho$ with respect to Lebesgue measure, and $\ee(W) = 0$, $\ee(W^2) = 1$, then the function
\[
h(x) = \frac{\int_x^\infty y \rho(y) dy}{\rho(x)}
\]
serves the purpose. Now if $h(W) \approx 1$ in a probabilistic sense, then we can conclude that
\[
\ee(\varphi(W)W) \approx \ee(\varphi'(W)),
\]
and it would follow by Stein's method that $W$ is approximately standard gaussian. 
This idea already occurs in the literature on normal approximation~\cite{cacoullosetal94}. However, it is not at all clear how one can infer facts about $h(W)$ when $W$ is an immensely complex object like a linear statistic of eigenvalues of a Wigner matrix. One of the main contributions of this paper is an explicit formula for $h(W)$ when $W$ can be expressed as a differentiable function of a collection of independent gaussian random variables.
\begin{lmm}\label{basiclmm}
Suppose $X = (X_1,\ldots,X_n)$ is a vector of independent standard gaussian random variables, and $g:\rr^n \ra \rr$ is an absolutely continuous function. Let $W = g(X)$, and suppose that $\ee(W) = 0$ and $\ee(W^2) = 1$. Suppose $h$ is a function satisfying \eqref{hdef} for all Lipschitz $\varphi$. Then $h(W) = \ee(T(X)|W)$, where
\[
T(x) := \int_0^1 \frac{1}{2\st} \ee\biggl(\sum_{i=1}^n \fpar{g}{x_i}(x) \fpar{g}{x_i}(\st x + \sst X) \biggr) dt.
\]
\end{lmm}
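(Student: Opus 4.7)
The identity $\ee(\varphi(W)W) = \ee(\varphi'(W) h(W))$ pins down $h(W)$ uniquely (as an $L^1$ function of $W$) for a rich enough class of test functions $\varphi$. So it suffices to exhibit a function $T$ such that
\[
\ee(\varphi(W) W) = \ee(\varphi'(W)\, T(X)) \qquad \text{for all Lipschitz } \varphi,
\]
because then conditioning the right-hand side on $W$ gives $h(W) = \ee(T(X)\mid W)$. The plan is to construct $T$ by a Gaussian interpolation argument. Let $\xp = (\xp_1,\ldots,\xp_n)$ be an independent copy of $X$, write $X_t := \st\, X + \sst\, \xp$ (so $X_t$ is itself a standard Gaussian vector, with $X_0 = \xp$ and $X_1 = X$), and set
\[
f(t) := \ee\bigl(\varphi(g(X))\, g(X_t)\bigr).
\]
Then $f(1) = \ee(\varphi(W) W)$, while $f(0) = \ee(\varphi(W))\,\ee(g(\xp)) = 0$ because $\xp$ is independent of $X$ and $\ee(W) = 0$. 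Thus $\ee(\varphi(W)W) = \int_0^1 f'(t)\,dt$, and the task reduces to computing $f'(t)$.

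Differentiating under the expectation and using $\frac{d}{dt}(X_t)_i = \frac{X_i}{2\st} - \frac{\xp_i}{2\sst}$ gives
\[
f'(t) = \sum_{i=1}^n \ee\!\left(\varphi(g(X))\,\fpar{g}{x_i}(X_t)\cdot\frac{X_i}{2\st}\right) - \sum_{i=1}^n \ee\!\left(\varphi(g(X))\,\fpar{g}{x_i}(X_t)\cdot\frac{\xp_i}{2\sst}\right).
\]
Now I would apply the Gaussian integration by parts identity $\ee(F\,X_i) = \ee(\partial F/\partial X_i)$ in the first sum and $\ee(F\,\xp_i) = \ee(\partial F/\partial \xp_i)$ in the second, with $F = \varphi(g(X))\fpar{g}{x_i}(X_t)$. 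Since $X_t = \st X + \sst \xp$, differentiating $\fpar{g}{x_i}(X_t)$ in $X_i$ (resp.\ $\xp_i$) produces a factor of $\st$ (resp.\ $\sst$) times $\spar{g}{x_i}(X_t)$, while differentiating $\varphi(g(X))$ in $\xp_i$ gives $0$. Collecting terms, the Hessian contributions from the two sums are $\frac{1}{2\st}\cdot\st\, \ee(\varphi(g(X))\spar{g}{x_i}(X_t))$ and $\frac{1}{2\sst}\cdot\sst\, \ee(\varphi(g(X))\spar{g}{x_i}(X_t))$; these cancel exactly, leaving only
\[
f'(t) = \frac{1}{2\st}\sum_{i=1}^n \ee\!\left(\varphi'(g(X))\,\fpar{g}{x_i}(X)\,\fpar{g}{x_i}(X_t)\right).
\]

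Integrating over $t$, interchanging integral and expectation by Fubini, and pulling out the $X$-measurable factor $\varphi'(g(X))$ before taking conditional expectation with respect to $X$ (which integrates out $\xp$ and produces exactly $T(X)$ as written in the statement), yields $\ee(\varphi(W)W) = \ee(\varphi'(W)\,T(X))$, which is what was needed. The main obstacle is not the algebra—the cancellation is essentially forced—but rather the regularity bookkeeping: (i) justifying differentiation under the expectation, Fubini, and especially the Gaussian integration by parts when $g$ is only absolutely continuous; and (ii) handling the $1/\st$ singularity at $t=0$ so that the $t$-integral converges. Both issues are dealt with by first proving the formula for smooth, compactly supported $g$ (where everything is legal and the singularity is integrable because $\fpar{g}{x_i}(X) - \fpar{g}{x_i}(X_t) = O(\sst)$ in $L^2$ near $t=1$, and a symmetric bound works near $t=0$ using $\ee(\fpar{g}{x_i}(\xp)) \cdot \ee(\fpar{g}{x_i}(X)\varphi'(g(X)))$-type splitting) and then approximating the general absolutely continuous $g$ by a mollification, passing to the limit in the identity.
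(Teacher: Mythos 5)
Your proof is correct and follows essentially the same Gaussian-interpolation route as the paper's (given rigorously in the proof of Lemma \ref{gaussian}): write $\ee(\varphi(W)W) = \int_0^1 \frac{d}{dt}\,\ee\bigl(\varphi(g(X))\,g(X_t)\bigr)\,dt$ and then apply Gaussian integration by parts. The only presentational difference is at the IBP step: you integrate by parts separately in $X_i$ and $\xp_i$ and check that the two Hessian contributions cancel, whereas the paper first passes to the rotated independent pair $U_t = \st\,Y + \sst\,Y'$, $V_t = \sst\,Y - \st\,Y'$ and does a single IBP in $V_{t,i}$, in which setup the Hessian term never appears because $\fpar{f}{y_i}(U_t)$ does not depend on $V_t$ at all; your cancellation is precisely the coordinate-wise shadow of the fact that $\frac{X_i}{2\st}-\frac{\xp_i}{2\sst}$ is proportional to $V_{t,i}$, the direction orthogonal to that along which $X_t$ varies.
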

\noindent Barring the technical details, the proof of this lemma is surprisingly simple. To establish \eqref{hdef}, we only have to show that for all Lipschitz $\varphi$,
\[
\ee(\varphi(W)W) = \ee(\varphi'(W) T(X)).
\]
This is achieved via gaussian interpolation. Let $X'$ be an independent copy of $X$, and let $W_t = g(\st X + \sst X')$. Since $\ee(W) = 0$, we have
\begin{align*}
\ee(&\varphi(W)W) = \ee(\varphi(W)(W_1 - W_0)) = \int_0^1 \ee\biggl(\varphi(W) \frac{\partial W_t}{\partial t}\biggr) dt\\
&= \ee\biggl(\int_0^1 \varphi(W) \sum_{i=1}^n \biggl(\frac{X_i}{2\st} - \frac{X'_i}{2\sst}\biggr)\fpar{g}{x_i}(\st X + \sst X') dt\biggr).
\end{align*}
Integration by parts on the right hand side gives the desired result. The details of the proof are contained in the proof of the more elaborate Lemma~\ref{gaussian} in Section \ref{proofs}.

Since $\ee(W^2) = 1$, taking $\varphi(x) = x$ it follows that $\ee(h(W)) = 1$. Combining this with the fact that $\var(h(W)) \le \var(T(X))$, we see that we only have to bound $\var(T(X))$ to show that $W$ is approximately gaussian. Now, if $g$ is a complicated function, $T$ is even more complicated. Hence, we cannot expect to evaluate $\var(T(X))$. On the other hand, we can always use the gaussian Poincar\'e inequality \eqref{poincareineq} to compute a bound on $\var(T(X))$. This involves working with $\nabla T$. Since $T$ already involves the first order derivatives of $g$, $\nabla T$ brings the second order derivatives into the picture. This is how we relate the smallness of the Hessian of $g$ to the approximate gaussianity of $g(X)$, leading to Theorem \ref{poincare1} in the next section.

We should mention here that a problem with Lemma \ref{basiclmm} is that we have to  know how to center and scale $W$ so that $\ee(W) = 0$ and $\ee(W^2) =1$. This may not be easy in practice.

It is also worth noting that Lemma \ref{basiclmm} can, in fact, be used to prove the gaussian Poincar\'e inequality \eqref{poincareineq} --- just by taking $\varphi(x) = x$ and applying the Cauchy-Schwarz inequality to bound the terms inside the integral in the expression for $\ee(T)$. In this sense, one can view Lemma \ref{basiclmm} as a generalization of the gaussian Poincar\'e inequality. Incidentally, the first proof of the gaussian Poincar\'e inequality in the probability literature is due to H.~Chernoff \cite{chernoff81} who used Hermite polynomial expansions. However, such inequalities have been known to analysts for a long time under the name of `Hardy inequalities with weights' (see e.g.\ Muckenhoupt \cite{muckenhoupt72}).

We should also mention two other concepts from the existing literature that may be related to this work. The first is the notion of the `zerobias transform' of $W$, as defined by Goldstein and Reinert \cite{goldsteinreinert97}. A random variable $W^*$ is called a zerobias transform of $W$ if for all $\varphi$, we have
\[
\ee(\varphi(W)W) = \ee(\varphi'(W^*)).
\]
A little consideration shows that our function $h$ is just the density of the law of $W^*$ with respect to the law of $W$ when the laws are absolutely continuous with respect to each other. However, while it is quite difficult to construct zerobias transforms (not known at present for linear statistics of eigenvalues), Lemma \ref{basiclmm} gives a direct formula for $h$.

The second related idea is the work of Borovkov and Utev \cite{borovkovutev83} which says that if a random variable $W$ with $\ee(W) = 0$ and $\ee(W^2) = 1$ satisfies a Poincar\'e inequality with Poincar\'e constant close to $1$, then $W$ is approximately standard gaussian (if the Poincar\'e constant is exactly $1$, the $W$ is exactly standard gaussian). As shown by Chen \cite{chen88}, this fact can be used to prove central limit theorems in ways that are closely related to Stein's method. Although it seems plausible, we could not detect any apparent relationship between this concept and our method of extending Poincar\'e inequalities to the second order.

\section{Second order Poincar\'e inequalities}\label{general}
All our results are for functions of random variables belonging to the following class of distributions. 
\begin{defn}
For each $c_1,c_2 > 0$, let $\ml(c_1, c_2)$ be the class of probability measures on $\rr$ that arise as laws random variables like $u(Z)$, where $Z$ is a standard gaussian r.v.\ and $u$ is a twice continuously differentiable function such that for all $x\in \rr$
\[
|u'(x)|\le c_1 \ \text{and} \ |u''(x)|\le c_2.
\]
\end{defn}
\noindent For example, the standard gaussian law is in $\ml(1,0)$. Again, taking $u = $ the gaussian cumulative distribution function, we see that the uniform distribution on the unit interval is in $\ml((2\pi)^{-1/2}, (2\pi e)^{-1/2})$. 
For simplicity, we just say that a random variable $X$ is ``in $\ml(c_1,c_2)$'' instead of the more elaborate statement that ``the distribution of $X$ belongs to~$\ml(c_1,c_2)$''.


Recall that for any two random variables $X$ and $Y$, the supremum of $|\pp(X\in B) - \pp(Y\in B)|$ as $B$ ranges over all Borel sets is called the total variation distance between the laws of $X$ and $Y$, often denoted simply by $d_{TV}(X,Y)$. Note that the total variation distance remains unchanged under any transformation like $(X,Y) \ra (f(X),f(Y))$ where $f$ is a measurable bijective map. 
Next, recall that the operator norm of an $m\times n$ real or complex matrix $A$ is defined as
\[
\|A\| := \sup\{\|Ax\|: x\in \cc^n, \|x\|=1\}.
\]
Recall that $\|A\|^2$ is the largest eigenvalue of $A^*A$. If $A$ is a hermitian matrix, $\|A\|$ is just the spectral radius (i.e.\ the eigenvalue with the largest absolute value) of $A$. This is the default norm for matrices in this paper, although occasionally we use the Hilbert-Schmidt norm
\[
\|A\|_{HS} := \bigl(\sum_{i,j} |a_{ij}|^2\bigr)^{1/2}.
\]
The following theorem gives normal approximation bounds for general smooth functions of independent random variables whose laws are in $\ml(c_1,c_2)$ for some finite $c_1, c_2$. 
\begin{thm}\label{poincare1}
Let $X = (X_1,\ldots,X_n)$ be a vector of independent random variables in $\ml(c_1,c_2)$ for some finite $c_1,c_2$. Take any $g\in C^2(\rr^n) $ and let $\nabla g$ and $\nabla^2 g$ denote the gradient and Hessian of $g$. Let 
\begin{align*}
\kappa_0 &= \biggl(\ee \sum_{i=1}^n \biggl|\fpar{g}{x_i}(X)\biggr|^4\biggr)^{1/2},\\
\kappa_1 &= (\ee\|\nabla g(X)\|^4)^{1/4}, \text{ and}\\
\kappa_2 &= (\ee\|\nabla^2 g(X)\|^4)^{1/4}.
\end{align*}
Suppose $W=g(X)$ has a finite fourth moment and let $\sigma^2 = \var(W)$. Let $Z$ be a normal random variable having the same mean and variance as $W$. 
Then
\[
d_{TV}(W,Z) \le \frac{2\sqrt{5}(c_1c_2 \kappa_0 + c_1^3\kappa_1\kappa_2)}{\sigma^2}.
\]
If we slightly change the setup by assuming that $X$ is a gaussian random vector with mean $0$ and covariance matrix $\Sigma$, keeping all other notation the same, then the corresponding bound is
\[
d_{TV}(W,Z) \le \frac{2\sqrt{5} \|\Sigma\|^{3/2} \kappa_1\kappa_2}{\sigma^2}.
\]
\end{thm}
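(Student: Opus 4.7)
The plan is to apply Lemma~\ref{basiclmm} together with the Gaussian Poincar\'e inequality \eqref{poincareineq} and Stein's method, after reducing to the case of standard Gaussian inputs. First I would represent $X_i = u_i(Z_i)$ with $Z = (Z_1,\ldots,Z_n)$ i.i.d.\ standard Gaussian, $|u_i'|\le c_1$, $|u_i''|\le c_2$, and set $\tilde g(z) := g(u_1(z_1),\ldots,u_n(z_n))$, so that $W = \tilde g(Z)$. Centering and scaling, I apply Lemma~\ref{basiclmm} to $(\tilde g - \ee W)/\sigma$: this produces an $h$ with $\ee h(W) = 1$ satisfying \eqref{hdef}, and $h(W) = \ee(T(Z)\mid W)/\sigma^2$, where $T$ is the interpolation integral built from $\nabla \tilde g$. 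Stein's lemma then bounds $d_{TV}(W,Z) \le 2\ee|1 - h(W)| \le 2\sqrt{\var(h(W))}/1 \le 2\sqrt{\var(T(Z))}/\sigma^2$ by conditional Jensen.

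Next, I apply the Gaussian Poincar\'e inequality to $T(Z)$: $\var(T(Z)) \le \ee\|\nabla T(Z)\|^2$. Differentiating the formula for $T$ under the integral sign produces two terms, each a time-integrated average of an expression of the form $(\nabla^2 \tilde g)(\cdot)\,(\nabla \tilde g)(\cdot)$ evaluated at $Z$ and at the interpolated point $\st Z + \sst Z'$. Bounding the matrix-vector products by operator-norm times Euclidean norm, applying Cauchy--Schwarz twice (once in the time integral, once in the $Z'$-expectation), and using the fact that the first and second moments of $\|\nabla \tilde g(Z)\|$ are controlled by those of $\|\nabla \tilde g\|^2$ over i.i.d.\ copies of $Z$, yields a bound of the shape
\[
\ee\|\nabla T(Z)\|^2 \;\lesssim\; \bigl(\ee\|\nabla\tilde g(Z)\|^4\bigr)^{1/2}\bigl(\ee\|\nabla^2\tilde g(Z)\|^4\bigr)^{1/2},
\]
with an explicit numerical constant ($5$ after tracking the $(a+b)^2\le 2a^2+2b^2$ and the $\int \tfrac{dt}{2\sqrt t}$ factors).

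The last step is the chain-rule bookkeeping that relates $\tilde g$ back to $g$. By direct computation,
\[
\partial_i \tilde g(z) = u_i'(z_i)\,\partial_i g(X),\qquad \partial^2_{ij}\tilde g(z) = u_i'(z_i)u_j'(z_j)\,\partial^2_{ij} g(X) + \delta_{ij}u_i''(z_i)\,\partial_i g(X),
\]
so $\|\nabla\tilde g\|\le c_1\|\nabla g(X)\|$ and $\nabla^2\tilde g = A+D$ where $\|A\|\le c_1^2\|\nabla^2 g(X)\|$ and $D$ is diagonal with $|D_{ii}|\le c_2|\partial_i g(X)|$. Splitting the operator-norm bound on $\nabla^2\tilde g$ this way is essential: the $A$ piece contributes $c_1^3\kappa_1\kappa_2$, while the diagonal piece $D$ produces the $c_1 c_2\kappa_0$ term. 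For the latter, rather than absorbing $D$ through $\|D\|\le c_2\max_i|\partial_i g|$ (which would cost a $\kappa_1^{1/2}\kappa_0^{1/2}$ factor), one keeps $D$ as a separate summand inside the $\nabla T$ computation and bounds the resulting diagonal sum $\sum_i(\partial_i \tilde g(z))^2(\partial_i \tilde g(Z_t))^2$ by Cauchy--Schwarz in $i$, producing exactly $(\ee\sum_i|\partial_i g|^4)^{1/2} = \kappa_0$ after the interpolation is carried out. The Gaussian-with-covariance-$\Sigma$ case is handled analogously by writing $X = \Sigma^{1/2}Z$: then $\nabla\tilde g = \Sigma^{1/2}\nabla g$ and $\nabla^2\tilde g = \Sigma^{1/2}(\nabla^2 g)\Sigma^{1/2}$, so no $u_i''$ correction arises and the bounds pick up $\|\Sigma\|^{1/2}$ and $\|\Sigma\|$ respectively, yielding $\|\Sigma\|^{3/2}\kappa_1\kappa_2$.

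The main obstacle I expect is the bookkeeping in the $\nabla T$ computation: Lemma~\ref{basiclmm} involves a double expectation and a time integral, and when one takes a further gradient and splits $\nabla^2\tilde g = A+D$ one has to be careful to apply Cauchy--Schwarz in the $i$-summation at the right moment so that the diagonal term produces the tight bound $c_1 c_2\kappa_0$ rather than the strictly weaker $c_1 c_2 \kappa_1 \kappa_0^{1/2}$ that a crude operator-norm bound on $D$ would give.
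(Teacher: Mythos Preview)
Your proposal is correct and follows essentially the same route as the paper: reduce to standard Gaussian inputs via $X_i = u_i(Z_i)$, apply Lemma~\ref{basiclmm} (packaged in the paper as Lemma~\ref{gaussian}) together with Stein's lemma to get $d_{TV}\le 2\sqrt{\var T(Z)}/\sigma^2$, bound $\var T$ by the Gaussian Poincar\'e inequality, and then split $\nabla^2\tilde g$ into the $c_1^2\nabla^2 g$ piece and the diagonal $u_i''$ piece, with the latter handled coordinatewise to produce $\kappa_0$ rather than via a crude operator-norm bound. The only item you omit is the preliminary reduction (the paper's Lemma~\ref{techlmm}) to $g$ with bounded $g,\nabla g,\nabla^2 g$, which is needed to justify differentiating under the integral in $T$; otherwise your outline matches the paper's proof, including the constant bookkeeping.
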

\noindent 
Note that when $X_1,\ldots,X_n$ are gaussian, we have $c_2 = 0$, and the first bound becomes simpler. 
For an elementary illustrative  application of Theorem \ref{poincare1}, consider the function
\[
g(x) = \frac{1}{\sqrt{n}}\sum_{i=1}^{n-1} x_ix_{i+1}.
\]
Then
\[
\fpar{g}{x_i} = \frac{x_{i-1} + x_{i+1}}{\sqrt{n}},
\]
with the convention that $x_0 \equiv x_{n+1} \equiv 0$. Again,
\[
\mpar{g}{x_i}{x_j} =
\begin{cases}
1/\sqrt{n} &\text{ if } |i-j| = 1,\\
0 &\text{ otherwise.}
\end{cases}
\]
It follows that $\kappa_0 = O(1/\sqrt{n})$, $\kappa_1 = O(1)$, and $\kappa_2 = O(1/\sqrt{n})$, which gives a total variation error bound of order $1/\sqrt{n}$.
Note that the usual way to prove a CLT for $n^{-1/2}\sum_{i=1}^{n-1}X_i X_{i+1}$ is via martingale arguments, but total variation bounds are not trivial to obtain along that route.
\vskip.1in
\noindent {\it Remarks.}
(i) Theorem \ref{poincare1} can be viewed as a second order analog of the gaussian Poincar\'e inequality \eqref{poincareineq}.
While the Poincar\'e inequality implies that $g(X)$ is concentrated whenever the individual coordinates have small `influence' on the outcome, Theorem \ref{poincare1} says that if in addition, the `interaction' between the coordinates is small, then $g(X)$ has gaussian behavior. The magnitude of $\|\nabla^2 g(X)\|$ is a measure of this interaction.

(ii) The smallness of $\|\nabla^2 g(X)\|$ does not seem to imply that $g(X)$ has any special structure, at least from what the author understands. In particular, it does not imply that $g(X)$ breaks up as an approximately additive function as in H\'ajek projections \cite{vanzwet84, friedrich89}. It is quite mysterious, at the present level of understanding, as to what causes the gaussianity.

(iii) A problem with Theorem \ref{poincare1} is that it does not say anything about $\sigma^2$. However, in practice, we only need to know a lower bound on $\sigma^2$ to use Theorem \ref{poincare1} for proving a CLT. Sometimes this may be a lot easier to achieve than computing the exact limiting value of $\sigma^2$. This is demonstrated in some of our examples in Section \ref{examples}.

(iv) One may wonder why we work with random variables in $\ml(c_1,c_2)$ instead of just gaussian random variables. Indeed, the main purpose of this limited generality is simply to pre-empt the question `Does your result extend to the non-gaussian case?'. However, it is more serious than that: The true rate of convergence may actually differ significantly depending on whether $X$ is gaussian or not, as demonstrated in the case of Wigner matrices in Section \ref{examples}.

(v) There is a substantial body of literature on central limit theorems for general functions of independent random variables. Some examples of available techniques are: (a) the classical method of moments, (b) the martingale approach and Skorokhod embeddings, (c) the method of Haj\'ek projections and some sophisticated extensions (e.g.\ \cite{vanzwet84}, \cite{ruschendorf85}, \cite{friedrich89}), (d) Stein's method of normal approximation (e.g.\ \cite{stein72}, \cite{stein86}, \cite{goldsteinreinert97}), and (e)~the big-blocks-small-blocks technique and its modern multidimensional versions (e.g.\ \cite{bickelbreiman83}, \cite{avrambertsimas93}). For further references --- particularly on Stein's method, which is a cornerstone of our approach --- we refer to \cite{chatterjeenormal1}. Apart from the method of moments, none of the other techniques have been used for dealing with random matrix problems.

\section{The random matrix result}\label{generalclt}
Let $n$ be a fixed positive integer and $\mi$ be a finite indexing set. Suppose that for each $1\le i,j\le n$, we have a $C^2$ map $a_{ij}: \rr^{\mi} \ra \cc$. For each $x\in \rr^\mi$, let $A(x)$ be the complex $n\times n$ matrix whose $(i,j)^{\mathrm{th}}$ element is $a_{ij}(x)$. Let 
\[
f(z) = \sum_{m=0}^\infty b_m z^m
\]
be an analytic function on the complex plane. Let $X=(X_u)_{u\in\mi}$ be a collection of independent random variables in $\ml(c_1,c_2)$ for some finite $c_1,c_2$. 
Under this very general setup, we give an explicit bound on the total variation distance between the laws of $\real \tr f(A(X))$ and a gaussian random variable with matching mean and variance (here as usual, $\real z$ and $\imag z$ denote the real and imaginary parts of a complex number $z$). 

As mentioned before, the method involves some bookkeeping, partly due to the quest for generality. The algorithm requires the user to compute a few quantities associated with the matrix model, step by step as described below. First, let
\begin{equation}\label{rs}
\begin{split}
\mathcal{R} &= \{\alpha\in \cc^\mi: {\textstyle\sum_{u\in \mi}} |\alpha_u|^2 = 1\} \ \text{ and } \\ \mathcal{S} &= \{\beta\in \cc^{n\times n} : {\textstyle \sum_{i,j=1}^n} |\beta_{ij}|^2 =1 \}.
\end{split}
\end{equation}
Next, define three functions $\gamma_0$, $\gamma_1$ and $\gamma_2$ on $\rr^\mi$ as follows.
\begin{equation}\label{gamma}
\begin{split}
\gamma_0(x) &:= \sup_{u\in \mi, \|B\|=1} \biggl|\tr\biggl(B\fpar{A}{x_u}\biggr)\biggr|,\\
\gamma_1(x)
&:= \sup_{\alpha\in \mathcal{R}, \beta \in \mathcal{S}}\biggl|\sum_{u\in \mi}\sum_{i,j=1}^n \alpha_u\beta_{ij}\fpar{a_{ij}}{x_u}\biggr|,  \text{ and }\\
\gamma_2(x)
&:= \sup_{\alpha, \alpha'\in \mathcal{R},\beta \in \mathcal{S}}\biggl|\sum_{u,v\in \mi} \sum_{i,j=1}^n \alpha_u\alpha_v'\beta_{ij}\mpar{a_{ij}}{x_u}{x_v}\biggr|.
\end{split}
\end{equation}
Define two entire functions $f_1$ and $f_2$ as 
\[
f_1(z)=\sum_{m=1}^\infty m |b_m|z^{m-1} \ \text{ and } \ f_2(z) = \sum_{m=2}^\infty m(m-1)|b_m| z^{m-2}.
\]
Let $\lambda(x) = \|A(x)\|$ and $r(x) = \mathrm{rank}(A(x))$. Usually, of course, we will just have $r(x)\equiv n$. Next, define three more functions
\begin{align*}
\eta_0(x) &= \gamma_0(x) f_1(\lambda(x)), \\
\eta_1(x) &= \gamma_1(x) f_1(\lambda(x)) \sqrt{r(x)}, \text{ and}\\
\eta_2(x) &= \gamma_2(x)f_1(\lambda(x)) \sqrt{r(x)} + \gamma_1(x)^2 f_2(\lambda(x)).
\end{align*}
Finally, define three quantities $\kappa_0$, $\kappa_1$, and $\kappa_2$ as
\begin{align*}
\kappa_0 &= (\ee(\eta_0(X)^2\eta_1(X)^2))^{1/2}, \\ \kappa_1 &= (\ee \eta_1(X)^4)^{1/4}, \ \text{ and  } \\
\kappa_2 &= (\ee \eta_2(X)^4)^{1/4}.
\end{align*} 
Let us pacify the possibly disturbed reader with the assurance that we only need {\it bounds} on $\kappa_0$, $\kappa_1$, and $\kappa_2$, as oppposed to exact computations. This turns out to be particularly easy to achieve in all our examples. We are now ready to state the theorem. 
\begin{thm}\label{matrix}
Let all notation be as above. Suppose $W= \real \tr f(A(X))$ has finite fourth moment and let $\sigma^2 = \var(W)$. Let $Z$ be a normal random variable with the same mean and variance as $W$. Then
\[
d_{TV}(W,Z) \le \frac{2\sqrt{5} (c_1c_2 \kappa_0 + c_1^3 \kappa_1\kappa_2)}{\sigma^2}.
\]
If we slightly change the setup by assuming that $X$ is a gaussian random vector with mean $0$ and covariance matrix $\Sigma$, keeping all other notation the same, then the corresponding bound is
\[
d_{TV}(W,Z) \le \frac{2\sqrt{5} \|\Sigma\|^{3/2} \kappa_1\kappa_2}{\sigma^2}.
\]
\end{thm}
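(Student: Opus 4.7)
The plan is to deduce Theorem \ref{matrix} directly from Theorem \ref{poincare1} applied to the scalar function $g:\rr^\mi\to\rr$ defined by $g(x)=\real\tr f(A(x))$. If I denote by $\tilde\kappa_0,\tilde\kappa_1,\tilde\kappa_2$ the three quantities that appear in Theorem \ref{poincare1} for this choice of $g$, then the task reduces to establishing the pointwise inequalities
\[
\Bigl|\fpar{g}{x_u}(x)\Bigr|\le\eta_0(x)\ \text{for all } u,\qquad \|\nabla g(x)\|\le\eta_1(x),\qquad \|\nabla^2 g(x)\|\le\eta_2(x).
\]
Combined with the trivial bound $\sum_u a_u^4\le(\max_u a_u^2)\sum_u a_u^2$, these yield $\tilde\kappa_j\le\kappa_j$ for $j=0,1,2$, after which Theorem \ref{poincare1} gives the claim verbatim, in both the $\ml(c_1,c_2)$ and the gaussian-$\Sigma$ versions.

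Differentiating $g$ term by term from $f(z)=\sum_m b_m z^m$ and using cyclicity of the trace in the form $\tfrac{d}{dt}\tr(A(t)^m)=m\tr(A(t)^{m-1}A'(t))$ gives
\[
\fpar{g}{x_u}(x)=\real\tr\biggl(f'(A)\fpar{A}{x_u}\biggr),
\]
and a second differentiation, with the Leibniz rule applied to $A^{m-1}$, yields
\begin{align*}
\mpar{g}{x_u}{x_v}(x) &= \real\tr\biggl(f'(A)\mpar{A}{x_u}{x_v}\biggr)\\
&\quad +\real\sum_{m\ge 2}m b_m\sum_{k=0}^{m-2}\tr\biggl(A^k\fpar{A}{x_v}A^{m-2-k}\fpar{A}{x_u}\biggr).
\end{align*}
The analyticity of $f$ on all of $\cc$ guarantees absolute convergence of both series at each fixed $x$.

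The first two pointwise estimates are now straightforward. From the power series, $\|f'(A)\|_{\mathrm{op}}\le f_1(\lambda(x))$; the individual estimate $|\partial g/\partial x_u|\le\eta_0$ follows by factoring out this operator norm and recognizing what remains as the defining supremum of $\gamma_0(x)$. For the gradient, I dualize: setting $M_\alpha:=\sum_u\alpha_u\partial A/\partial x_u$ for $\alpha\in\mathcal{R}$, one has $\|\nabla g(x)\|=\sup_\alpha\real\tr(f'(A)M_\alpha)$. Hilbert--Schmidt Cauchy--Schwarz together with $\|f'(A)\|_{HS}\le\sqrt{r(x)}\,\|f'(A)\|_{\mathrm{op}}$ and the identity $\sup_\alpha\|M_\alpha\|_{HS}=\gamma_1(x)$ (the dual characterization of $\gamma_1$) deliver $\|\nabla g\|\le\eta_1$.

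The main obstacle is the Hessian bound $\|\nabla^2 g\|\le\eta_2$. The first summand of $\partial^2 g/\partial x_u\partial x_v$ is handled as above, now dualizing also over $\alpha'\in\mathcal{R}$, and produces the contribution $\gamma_2(x)f_1(\lambda(x))\sqrt{r(x)}$. The delicate part is the second summand, which after dualization leads to four-factor traces of the form $\tr(A^kM_{\alpha'}A^{m-2-k}M_\alpha)$. The crucial inequality
\[
|\tr(PM_{\alpha'}QM_\alpha)|\le\|P\|_{\mathrm{op}}\|Q\|_{\mathrm{op}}\|M_{\alpha'}\|_{HS}\|M_\alpha\|_{HS}
\]
is obtained by regrouping as $\tr((PM_{\alpha'})(QM_\alpha))$, applying Hilbert--Schmidt Cauchy--Schwarz, and then using $\|PX\|_{HS}\le\|P\|_{\mathrm{op}}\|X\|_{HS}$. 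Specializing $P=A^k$ and $Q=A^{m-2-k}$ gives a contribution of $\lambda(x)^{m-2}\gamma_1(x)^2$ per $(m,k)$, and summing $\sum_{m\ge 2}m(m-1)|b_m|\lambda^{m-2}=f_2(\lambda)$ produces the second half of $\eta_2$. Keeping careful track of whether each factor should be measured in operator or in Hilbert--Schmidt norm---and consequently whether the rank factor $\sqrt{r(x)}$ should appear---is precisely what forces the two-term structure of $\eta_2$, and this bookkeeping is the principal difficulty of the proof.
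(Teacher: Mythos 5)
Your argument follows the same route as the paper: apply Theorem \ref{poincare1} to $g(x)=\real\tr f(A(x))$, establish the three pointwise bounds $|\partial g/\partial x_u|\le\eta_0$, $\|\nabla g\|\le\eta_1$, $\|\nabla^2 g\|\le\eta_2$, and observe $\sum_u|\partial g/\partial x_u|^4\le\eta_0^2\eta_1^2$; the four-factor trace inequality you isolate is exactly Lemma \ref{trivial}. The only organizational difference is that the paper factors the derivative bookkeeping into Lemma \ref{bd1} (treating the second-derivative array of $\psi=\tr f$ as an abstract $n^2\times n^2$ operator $H$ and bounding $\|H\|\le f_2(\|A\|)$ once), whereas you inline those same estimates directly on the traces $\tr(A^kM_{\alpha'}A^{m-2-k}M_\alpha)$.
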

\noindent {\it Remarks.}
(i) A problem with Theorem \ref{matrix} is that it does not give a formula or approximation for $\sigma^2$. However, central limit theorems can still be proven if we can only compute suitable {\it lower bounds} for $\sigma^2$. In Section \ref{examples}, we show that this is eminently possible in a variety of situations (e.g.\ Theorems \ref{wignerthm} and \ref{toeplitz}).

(ii) Although the result is stated for entire functions, the concrete error bound, combined with appropriate concentration inequalities, should make it possible to prove limit theorems for general $C^1$ functions wherever required. 

(iii) Note that the matrices need not be hermitian, and the random variables need not be symmetric around zero. However, it is a significant restriction that the $X_{ij}$'s have to belong to $\ml(c_1,c_2)$ for some finite $c_1,c_2$. In particular, they cannot be discrete.

(iv) By considering $\alpha f$ instead of $f$ for arbitrary $\alpha \in \cc$, we see that the normal approximation error bound can be computed for any linear combination of the real and imaginary parts of the trace. This allows us to prove central limit theorems for the complex statistic $\tr f(A)$ via Wold's device.

(v) It is somewhat surprising that such a general result can give useful error bounds for familiar random matrix models. 
Unfortunately, the case of random unitary and orthogonal matrices seems to be harder because of the complexity in expressing them as functions of independent random variables. This is under the scope of a future project.

\section{Applications}\label{examples}
This section is devoted to working out a number of applications of Theorem \ref{poincare1}. In all cases, we produce a total variation error bound where the variance of the linear statistic, $\sigma^2$, appears as an unknown quantity. In some of the examples (e.g.\ Wigner and Wishart matrices), the limiting value of $\sigma^2$ is known from the literature. In other cases, they are yet unknown, and the central limit theorems are proven modulo this lack of knowledge about~$\sigma^2$.

The following simple lemma turns out to be very useful for bounding $\gamma_0$, $\gamma_1$, and $\gamma_2$ in the examples. Recall the definitions of the operator norm and the Hilbert-Schmidt norm of matrices from Section \ref{general}.
\begin{lmm}\label{trivial}
Suppose $A_1,\ldots,A_n$ $(n \ge 3)$ are real or complex matrices of dimensions such that the product $A_1A_2\cdots A_n$ is defined. Then 
\begin{equation}\label{normprop}
\|A_1A_2\|_{HS}\le \min\{\|A_1\|\|A_2\|_{HS}, \|A_1\|_{HS}\|A_2\|\}.
\end{equation}
Moreover, for any $1\le i< j\le n$, 
\[
|\tr(A_1A_2\cdots A_n)| \le \|A_i\|_{HS}\|A_j\|_{HS} \prod_{k\in [n]\backslash\{i,j\}} \|A_k\|.
\]
\end{lmm}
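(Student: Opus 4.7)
The plan is to establish inequality \eqref{normprop} directly from the column-vector characterization of the Hilbert--Schmidt norm, and then derive the trace bound by cyclic rearrangement, a single application of the Cauchy--Schwarz inequality, and iterated use of \eqref{normprop}.

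For \eqref{normprop}, write $\|A_2\|_{HS}^2 = \sum_k \|A_2 e_k\|^2$, where $\{e_k\}$ is the standard basis of the appropriate column space. Then
\[
\|A_1 A_2\|_{HS}^2 = \sum_k \|A_1 A_2 e_k\|^2 \le \|A_1\|^2 \sum_k \|A_2 e_k\|^2 = \|A_1\|^2 \|A_2\|_{HS}^2,
\]
since $\|A_1 v\| \le \|A_1\|\|v\|$ for every vector $v$. This gives $\|A_1 A_2\|_{HS} \le \|A_1\|\|A_2\|_{HS}$. The other half of \eqref{normprop} follows by taking adjoints and observing that $\|B^*\| = \|B\|$ and $\|B^*\|_{HS} = \|B\|_{HS}$, so $\|A_1 A_2\|_{HS} = \|A_2^* A_1^*\|_{HS} \le \|A_2^*\|\|A_1^*\|_{HS} = \|A_2\|\|A_1\|_{HS}$.

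For the second assertion, fix $1\le i<j\le n$. By the cyclic invariance of the trace, I can replace the product $A_1 A_2\cdots A_n$ by $A_i A_{i+1}\cdots A_n A_1 \cdots A_{i-1}$, so without loss of generality $i=1$. Split the product into two blocks at position $j$: set $B = A_1 A_2\cdots A_{j-1}$ and $C = A_j A_{j+1}\cdots A_n$. The Cauchy--Schwarz inequality applied to the entries yields
\[
|\tr(BC)| = \bigl|{\textstyle\sum_{k,\ell}} B_{k\ell} C_{\ell k}\bigr| \le \|B\|_{HS}\|C\|_{HS}.
\]
Now iterate \eqref{normprop}, keeping the factor containing $A_1$ in the Hilbert--Schmidt norm and moving each other factor into the operator norm, to obtain
\[
\|B\|_{HS} \le \|A_1\|_{HS} \prod_{k=2}^{j-1}\|A_k\|, \qquad \|C\|_{HS} \le \|A_j\|_{HS} \prod_{k=j+1}^{n}\|A_k\|,
\]
and multiplying these bounds gives exactly the desired estimate.

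There is really no serious obstacle: the only mild subtlety is choosing the grouping so that the two Hilbert--Schmidt norms land on $A_i$ and $A_j$, which is precisely why cyclic invariance is used to move $A_i$ to the leftmost slot and why the split is made immediately before $A_j$. The hypothesis $n\ge 3$ plays no essential role in the argument (the $n=2$ case reduces to the usual $|\tr(A_1 A_2)|\le \|A_1\|_{HS}\|A_2\|_{HS}$); its only function is to ensure that at least one operator norm appears on the right-hand side.
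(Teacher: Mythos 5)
Your proof is correct, and it is close in spirit to the paper's but takes a slightly different route in the second half. The paper does not invoke cyclic invariance of the trace; instead it applies Cauchy--Schwarz to the split $\bigl|\tr(A_1\cdots A_n)\bigr| \le \|A_1\cdots A_i\|_{HS}\,\|A_{i+1}\cdots A_n\|_{HS}$ (cutting \emph{between} $A_i$ and $A_{i+1}$, with the factors in their original order). It then extracts $\|A_i\|_{HS}$ from the right end of the first block using $\|XY\|_{HS}\le\|X\|\,\|Y\|_{HS}$, and extracts $\|A_j\|_{HS}$ from the interior of the second block by first peeling off the prefix $A_{i+1}\cdots A_{j-1}$ and then the suffix $A_{j+1}\cdots A_n$, which requires \emph{both} directions of \eqref{normprop}. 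Your version, by rotating so that $A_i$ sits at the front and then cutting immediately before $A_j$, arranges for the HS factor to land at the leftmost slot of each block, so you only ever peel factors off to the right; this makes the iteration of \eqref{normprop} more uniform at the small cost of the (harmless) cyclic-trace step. One tiny point of notation: after declaring ``without loss of generality $i=1$'' you should, strictly speaking, also relabel $j$ as $j-i+1$; as written the subsequent ``position $j$'' refers to the relabeled index, which is fine once made explicit. Your proof of \eqref{normprop} itself is essentially identical to the paper's (columns of $A_2$ plus $\|A_1 v\|\le\|A_1\|\,\|v\|$); the adjoint trick you use for the other half is a clean substitute for the paper's terse ``similarly.''
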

\begin{proof}
Let $b_1,\ldots, b_n$ be the columns of $A_2$. Then
\begin{align*}
\|A_1A_2\|_{HS}^2 &= \tr(A_2^*A_1^*A_1A_2) = \sum_{i=1}^n \|A_1b_i\|^2 \\
&\le \|A_1\|^2\sum_{i=1}^n\|b_i\|^2 = \|A_1\|^2 \|A_2\|^2_{HS}.
\end{align*}
Similarly, we have $\|A_1A_2\| \le \|A_1\|_{HS}\|A_2\|$.
For the other inequality, note that a simple application of the Cauchy-Schwarz inequality shows that
\[
|\tr(A_1A_2\cdots A_n)| \le \|A_1\cdots A_i\|_{HS}\|A_{i+1}\cdots A_n\|_{HS}
\]
Now by the inequality \eqref{normprop},
\[
\|A_1\cdots A_i\|_{HS} \le \|A_1\cdots A_{i-1}\| \|A_i\|_{HS}.
\]
Similarly,
\begin{align*}
\|A_{i+1}\cdots A_n\|_{HS} &\le \|A_{i+1}\cdots A_{j-1}\| \|A_j \cdots A_n\|_{HS}\\
&\le \|A_{i+1}\cdots A_{j-1}\| \|A_j\|_{HS}\|A_{j+1} \cdots A_n\|.
\end{align*}
This completes the proof.
\end{proof}

\subsection{Generalized Wigner matrices}
Suppose $X = (X_{ij})_{1\le i \le j\le n}$ is a collection of independent random variables. Let $X_{ij} = X_{ji}$ for $i>j$ and let 
\begin{equation}\label{wignermatrix}
A_n = A_n(X) := \frac{1}{\sqrt{n}}(X_{ij})_{1\le i,j\le n}.
\end{equation}
A matrix like $A_n$ is called a Wigner matrix. Central limit theorems for linear statistics of eigenvalues of Wigner matrices have been extensively studied in the literature (see e.g.\ \cite{sinaisoshnikov98, sinaisoshnikov98b, soshnikov02, andersonzeitouni06a}). While the case of gaussian entries can be dealt with using analytical techniques \cite {johansson98}, the general case requires heavy combinatorics. 
To give a flavor of the results in the literature, let us state one key theorem from \cite{sinaisoshnikov98b} (although, technically, it is not a CLT for a fixed linear statistic).
\vskip.1in
\noindent{\bf Theorem.} (Sina\u\i \;and Soshnikov \cite{sinaisoshnikov98b}, Theorem 2) {\it Let $X_{ij}$ and $A_n$ be as above. Suppose that the  $X_{ij}$'s have symmetric distributions around zero, $\ee(X_{ij}^2) = 1/4$ for all $i,j$, and there exists a constant $K$ such that for every positive integer $m$ and all $i,j$, $\ee(X_{ij}^{2m})\le (Km)^m$. Let $p_n \ra \infty$ as $n\ra \infty$ such that $p_n = o(n^{2/3})$. Then}
\[
\ee(\tr A_n^{p_n}) = 
\begin{cases}
2^{3/2} n(\pi p_n^3)^{-1/2} (1+ o(1)) & \text{\it if $p_n$ is even,}\\
0 & \text{\it if $p_n$ is odd,}
\end{cases}
\]
{\it and the distribution of $\tr A_n^{p_n} - \ee (\tr A_n^{p_n})$ converges weakly to the normal law $N(0,1/\pi)$. }
\vskip.1in
\noindent As remarked in \cite{sinaisoshnikov98b} and demonstrated in \cite{sinaisoshnikov98}, the normal approximation result can be extended to the joint distribution of the traces of various powers, and then to general analytic functions.

We wish to extend the above result to the scenario where $\ee(X_{ij}^2)$ is not the same for all $i,j$.   A wide generalization of this problem has been recently investigated by Anderson and Zeitouni \cite{andersonzeitouni06a} under the assumption that $\ee(X_{ij}^2) \sim f(\frac{i}{n}, \frac{j}{n})$ where $f$ is a continuous function on $[0,1]^2$. Under further assumptions, explicit formulas for the limiting means and variances are also obtained in \cite{andersonzeitouni06a}.

If the structural assumptions are dropped and we just assume that $\ee(X_{ij}^2)$ is bounded above and below by positive constants, then there does not  seem to be much hope of getting limiting formulas. Surprisingly, however, Theorem \ref{matrix} still allows us to prove central limit theorems.
\begin{thm}\label{wignerthm}
Let $A_n$ be the Wigner matrix defined in \eqref{wignermatrix}. Suppose that the $X_{ij}$'s are all in $\ml(c_1,c_2)$ for some finite $c_1, c_2$, and have symmetric distributions around zero. Suppose there are two positive constants $c$ and $C$ such that $c \le \ee(X_{ij}^2)\le C$ for all $i,j$. Let $p_n$ be a sequence of positive integers such that $p_n = o(\log n)$. Let $W_n = \tr(A_n^{p_n})$. Then as $n \ra \infty$,
\[
\frac{W_n - \ee(W_n)}{\sqrt{\var(W_n)}} \ \text{ converges in total variation to } \ N(0,1).
\]
Moreover, $\var(W_n)$ stays bounded away from zero. The same results are true also if $W_n = \tr f(A_n)$, where $f$ is a fixed nonzero polynomial with nonnegative coefficients.
\end{thm}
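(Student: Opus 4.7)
The plan is to apply Theorem~\ref{matrix} with $f(z)=z^{p_n}$ (and afterwards with the given polynomial $f$), indexing set $\mi = \{(i,j):1\le i\le j\le n\}$, and $A = A_n(X)$. Since $A$ is \emph{linear} in $X$, $\gamma_2\equiv 0$ identically. For each $u = (k,l)\in\mi$, the matrix $\partial A/\partial x_u$ has at most two nonzero entries, each of magnitude $1/\sqrt n$, supported at positions $(k,l)$ and $(l,k)$. Hence for any $B$ with $\|B\|\le 1$, $|\tr(B\,\partial A/\partial x_u)|\le (|B_{kl}|+|B_{lk}|)/\sqrt n \le 2/\sqrt n$, giving $\gamma_0(x)\le 2/\sqrt n$; a direct Cauchy--Schwarz estimate gives $\gamma_1(x)\le 2/\sqrt n$ as well, both uniformly in~$x$.

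Because $X_{ij} = u_{ij}(Z_{ij})$ with $|u_{ij}'|\le c_1$, the map $(Z_{ij})\mapsto \|A_n(X)\|$ is $O(c_1/\sqrt n)$--Lipschitz, so standard Gaussian concentration yields $\ee\|A_n\|^m \le K^m$ for some $K=K(c_1,c_2,c,C)$, at least for $m$ polynomially bounded in $n$. With $f_1(z)=p_nz^{p_n-1}$ and $f_2(z)=p_n(p_n-1)z^{p_n-2}$ one gets
\[
\eta_0 = O\bigl(p_n\|A_n\|^{p_n-1}/\sqrt n\bigr),\quad \eta_1 = O\bigl(p_n\|A_n\|^{p_n-1}\bigr),\quad \eta_2 = O\bigl(p_n^2\|A_n\|^{p_n-2}/n\bigr),
\]
hence $\kappa_0 = O(p_n^2K^{2p_n}/\sqrt n)$, $\kappa_1 = O(p_nK^{p_n})$, $\kappa_2 = O(p_n^2K^{p_n}/n)$. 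Theorem~\ref{matrix} then gives a total variation bound of order $p_n^2K^{2p_n}/(\sigma^2\sqrt n)$, and the assumption $p_n = o(\log n)$ forces $K^{2p_n} = n^{o(1)}$, so the bound is $o(1)/\sigma^2$.

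The main obstacle is then to show that $\sigma^2 = \var(W_n)$ stays bounded below by a positive constant. My plan is to lower-bound $\sigma^2$ via the Hoeffding (ANOVA) inequality $\var(W_n)\ge \sum_{(i,j)\in\mi}\var(\ee(W_n\,|\,X_{ij}))$, focusing on a single well-chosen entry. Writing $A_n = A_n^{(ij)} + (X_{ij}/\sqrt n)(E_{ij}+E_{ji})$ with $A_n^{(ij)}$ independent of $X_{ij}$, Taylor-expand $\tr(A_n^{p_n})$ in $X_{ij}$. The parity of $p_n$ together with the symmetry of the entries selects which coefficient has nonzero expectation; that surviving coefficient is, up to combinatorial factors, $n^{-r/2}$ times a matrix element of a fixed power of $A_n^{(ij)}$, whose second moment expands in the Wigner moment series as a nonnegative sum over closed pair-matched walks based at $i,j$. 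Keeping only the dominant paired walks gives a lower bound of constant order times the appropriate power of $1/n$, and summing over the $\Theta(n^2)$ pairs cancels the prefactor to yield $\sigma^2\ge c>0$ uniformly in $n$.

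For the polynomial case $W_n = \tr f(A_n)$, Theorem~\ref{matrix} applies directly with the fixed $f$; all the $\kappa$-estimates become $O(1/\sqrt n)$ with constants depending on $f$ and $K$. The only subtle point is the lower bound on $\var(\tr f(A_n))$: here the nonnegativity of the coefficients of $f$ prevents destructive cancellation between the contributions of different degree in the walk expansion of the variance (or of the single-entry ANOVA projection), so the same Hoeffding--plus--walks argument goes through to give $\sigma^2\ge c>0$. I expect the uniform lower bound on $\sigma^2$ (both in the pure-power and in the polynomial case) to be the main difficulty; the remaining ingredients are a fairly mechanical application of Theorem~\ref{matrix} together with standard sub-Gaussian operator-norm concentration.
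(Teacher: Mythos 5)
Your outline is right in its two halves—apply Theorem~\ref{matrix} after computing $\gamma_0,\gamma_1,\gamma_2$ (with $\gamma_2\equiv 0$ by linearity of $A$ in $X$), then produce a lower bound on $\sigma^2$—but the two places where you deviate from the paper are both incomplete, and one of them is a genuine gap.

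\textbf{Operator norm moments.} You invoke sub-Gaussian concentration of the Lipschitz map $X\mapsto \|A_n(X)\|$ to conclude $\ee\|A_n\|^m\le K^m$. Concentration controls fluctuations around $\ee\|A_n\|$ but does not, by itself, bound $\ee\|A_n\|$; that requires a separate argument. The paper obtains the needed moment bound $\ee(\lambda_n^{m})\le K(4C)^{m/2}$ by first reducing to the standardized matrix $\Xi_n$ (using the key covariance inequality discussed below, which gives $\ee(\tr A_n^l)\le C^{l/2}\ee(\tr\Xi_n^l)$), and then invoking the Sina\u\i--Soshnikov estimate $\ee(\tr\Xi_n^{p_n})=\Theta(n p_n^{-3/2}2^{p_n})$ with $l=2\lfloor\log n\rfloor$. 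This bypasses any a priori control of $\ee\|A_n\|$. Your route can be salvaged (e.g.\ by a moment-method bound on $\ee\|A_n\|$), but as written it is missing a step.

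\textbf{Variance lower bound.} This is where your route is genuinely different and, as sketched, not yet a proof. The paper's argument does not use an ANOVA/Hoeffding projection onto single coordinates at all. Instead, it proves a monomial covariance inequality: using the symmetry of the $X_{ij}$ (the sole reason symmetry is assumed), it shows that for any two monomials $\prod X_{ij}^{\alpha_{ij}}$ and $\prod X_{ij}^{\beta_{ij}}$,
\[
\ee\Bigl(\xi_{ij}^{\alpha_{ij}+\beta_{ij}}\Bigr)\;\ge\;\ee\bigl(\xi_{ij}^{\alpha_{ij}}\bigr)\,\ee\bigl(\xi_{ij}^{\beta_{ij}}\bigr)\;\ge\;0,
\]
where $\xi_{ij}=X_{ij}/s_{ij}$. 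This yields two consequences at once: (a) $\var(\tr A_n^{p_n})\ge c^{p_n}\var(\tr\Xi_n^{p_n})$, reducing to the standardized matrix; and (b) $\var(\tr\Xi_n^{p_n})\ge n^{-p_n}\sum\var(\xi_{i_1i_2}\cdots\xi_{i_{p_n}i_1})$, since dropping nonnegative covariance terms only decreases the sum. Keeping only tuples with all $i_k$ distinct gives each surviving variance equal to $1$, whence $\var(\tr\Xi_n^{p_n})\ge n(n-1)\cdots(n-p_n+1)/n^{p_n}$, bounded below when $p_n=o(\sqrt n)$. The same positivity observation is exactly what prevents ``destructive cancellation'' in the polynomial case. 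Your ANOVA route, by contrast, requires (i) computing $\ee(W_n|X_{ij})$ as a polynomial in $X_{ij}$ whose coefficients are sums over constrained walks, (ii) lower-bounding the variance of that conditional expectation, and (iii) summing over $(i,j)$. Step (ii) again needs precisely this positivity of covariances; without it, the ``keeping only the dominant paired walks'' step is not justified. So the ANOVA decomposition adds work without removing the need for the key lemma, and your sketch of it is too vague to check. If you want to keep the ANOVA framing, you should still prove the covariance-positivity lemma explicitly; once you have it, the direct route is shorter. Also note that the paper's lower bound degrades like $c^{p_n}$, which is absorbed into the final bound $Kp_n^2(4C/c)^{p_n}/\sqrt n$; your claim of a uniform constant lower bound $\sigma^2\ge c>0$ for all $p_n\to\infty$ is stronger than what the paper establishes and is not justified by your sketch either.
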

\noindent Note that the rate of growth allowed for $p_n$ is $o(\log n)$, which is significantly worse than the Sina\u\i -Soshnikov condition $p_n = o(n^{2/3})$. We do not know how to improve that at present. Neither do we know how to produce asymptotic formulas for $\ee(W_n)$ and $\var(W_n)$ as in Anderson and Zeitouni \cite{andersonzeitouni06a}. 
On the positive side, the assumption that $c\le \ee(X_{ij}^2) \le C$ is more general than any available result, as far as we know. In particular, we do not require asymptotic `continuity' of $\ee(X_{ij}^2)$ in $(i,j)$. The proof of Theorem~\ref{wignerthm} will follow from the following finite sample error bound. 
\begin{lmm}\label{wigner}
Fix $n$. Let $A = A(X)$ be the Wigner matrix defined in \eqref{wignermatrix}. Suppose the $X_{ij}$'s are in $\ml(c_1,c_2)$ for some finite $c_1,c_2$. Take an entire function $f$ and define $f_1$, $f_2$ as in Theorem~\ref{matrix}. Let $\lambda$ denote the spectral radius of $A$. Let $a = (\ee f_1(\lambda)^4)^{1/4}$ and $b = (\ee f_2(\lambda)^4)^{1/4}$. Suppose $W= \real \tr f(A)$ has finite fourth moment and let $\sigma^2 = \var(W)$. Let $Z$ be a normal random variable with the same mean and variance as $W$. Then
\[
d_{TV}(W, Z) \le \frac{2\sqrt{5}}{\sigma^2}\biggl(\frac{4c_1 c_2 a^2}{\sqrt{n}} + \frac{8c_1^3 ab}{n}\biggr).
\]
\end{lmm}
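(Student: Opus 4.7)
The plan is to apply Theorem~\ref{matrix} directly to the Wigner matrix $A(X)$. I take the indexing set to be $\mi = \{(k,\ell) : 1 \le k \le \ell \le n\}$ and, for each $(i,j)$, set $a_{ij}(X) = X_{ij}/\sqrt{n}$, using the convention $X_{ji} = X_{ij}$. Because every $a_{ij}$ is linear in $X$, all second partial derivatives vanish identically, so $\gamma_2 \equiv 0$ and the first summand in $\eta_2$ disappears. The only work is to estimate $\gamma_0$ and $\gamma_1$ and then unwind the definitions.

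Concretely, for $u = (k,\ell) \in \mi$, the matrix $\partial A/\partial X_{k\ell}$ is $1/\sqrt{n}$ times the symmetric pattern matrix supported on $\{(k,\ell),(\ell,k)\}$ (or on $(k,k)$ alone, if $k=\ell$). For any $B$ with $\|B\|=1$ one has $|B_{ij}|\le 1$, so
\[
\Bigl|\tr\bigl(B\,\partial A/\partial X_{k\ell}\bigr)\Bigr| \le \frac{2}{\sqrt{n}},
\]
giving $\gamma_0 \le 2/\sqrt{n}$. For $\gamma_1$, fixing $\alpha\in\mathcal R$ and $\beta\in\mathcal S$, the sum in \eqref{gamma} reduces to $\tfrac{1}{\sqrt n}\bigl[\sum_{k<\ell}\alpha_{(k,\ell)}(\beta_{k\ell}+\beta_{\ell k}) + \sum_k \alpha_{(k,k)}\beta_{kk}\bigr]$. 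An application of Cauchy--Schwarz in $\alpha$, together with the elementary bound $\sum_{k<\ell}|\beta_{k\ell}+\beta_{\ell k}|^2 + \sum_k|\beta_{kk}|^2 \le 2\sum_{i,j}|\beta_{ij}|^2 = 2$, yields $\gamma_1 \le \sqrt{2/n}$.

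Plugging these into the definitions of $\eta_0,\eta_1,\eta_2$, and using the trivial bound $r(X)\le n$, gives the pointwise estimates
\[
\eta_0(X) \le \frac{2}{\sqrt{n}}f_1(\lambda),\qquad \eta_1(X) \le \sqrt{2}\,f_1(\lambda),\qquad \eta_2(X) \le \frac{2}{n}f_2(\lambda).
\]
Therefore $\kappa_0 \le (8/n)^{1/2}(\ee f_1(\lambda)^4)^{1/2} = (2\sqrt{2}/\sqrt{n})\,a^2$, $\kappa_1 \le \sqrt{2}\,a$, and $\kappa_2 \le (2/n)\,b$. Substituting into the bound of Theorem~\ref{matrix} produces
\[
d_{TV}(W,Z) \;\le\; \frac{2\sqrt{5}}{\sigma^2}\biggl(\frac{2\sqrt 2\, c_1c_2 a^2}{\sqrt n} + \frac{2\sqrt 2\, c_1^3 ab}{n}\biggr),
\]
which is stronger than the claimed inequality (since $2\sqrt{2}<4$ and $2\sqrt{2}<8$).

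There is no real obstacle in this proof: the entire argument is bookkeeping on top of Theorem~\ref{matrix}. The only place demanding a bit of care is the factor of $2$ coming from the forced symmetry $X_{ij}=X_{ji}$ (each off-diagonal parameter $X_{k\ell}$ controls two matrix entries), and the separate treatment of diagonal indices in the suprema defining $\gamma_0$ and $\gamma_1$; once that is handled, the rest is a direct substitution.
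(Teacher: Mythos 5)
Your proof is correct and follows the same route as the paper: apply Theorem~\ref{matrix} to the linear map $X \mapsto A(X)$, observe $\gamma_2 \equiv 0$, bound $\gamma_0$ and $\gamma_1$, and substitute. The only difference is that you treat the diagonal indices $u=(k,k)$ separately when estimating $\gamma_1$ and thereby get $\gamma_1 \le \sqrt{2/n}$, whereas the paper applies a slightly looser argument (effectively over-counting the diagonal) and only obtains $\gamma_1 \le 2/\sqrt{n}$; your version correctly propagates to $\kappa_0 \le (2\sqrt{2}/\sqrt{n})a^2$, $\kappa_1 \le \sqrt{2}\,a$, $\kappa_2 \le (2/n)b$, yielding a marginally sharper constant ($2\sqrt{2}$ in place of the paper's $4$ and $8$), which of course still implies the stated bound.
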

\noindent {\it Remarks.}
(i) It is well known that under mild conditions, $\lambda$ converges to a finite limit as $n \ra \infty$ (see e.g.\ \cite{bai99}, Section 2.2.1). Even exponentially decaying tail bounds are available \cite{gz00}. Thus $a$ and $b$ are generally $O(1)$ in the above bound. 

(ii) Sina\u\i \;and Soshnikov (\cite{sinaisoshnikov98}, Corollary 1) showed that $\sigma^2$ converges to a finite limit under certain conditions on~$f$ and the distribution of the $X_{ij}$'s. If these conditions are satisfied and the limit is nonzero, then we get a bound of order $1/\sqrt{n}$. Moreover, for gaussian Wigner matrices we have $c_2 = 0$ and hence a bound of order $1/n$. The difference between the gaussian and non-gaussian cases is not an accident. With $f(z) = z$, we have
\[
\tr f(A) = \frac{1}{\sqrt{n}}\sum_{i=1}^n X_{ii}.
\]
In this case we know that the error bound in the non-gaussian case is exactly of order $1/\sqrt{n}$. 
\vskip.1in 
Before proving Lemma \ref{wigner}, let us first prove Theorem \ref{wignerthm} using the lemma. The main difference between Lemma \ref{wigner} and Theorem \ref{wignerthm} is that the assumption of symmetry on the distributions of the entries allows us to compute a lower bound on the unknown quantity $\sigma^2$ and actually prove a CLT in Theorem \ref{wignerthm}. 
\begin{proof}[Proof of Theorem \ref{wignerthm}]
Let $s_{ij}^2 = \ee(X_{ij}^2)$, and let
\[
\xi_{ij} = \frac{X_{ij}}{s_{ij}}.
\]
Let $\Xi_n$ denote the matrix $\frac{1}{\sqrt{n}}(\xi_{ij})_{1\le i,j\le n}$.
Now take any collections of nonnegative integers $(\alpha_{ij})_{1\le i\le j\le n}$ and $(\beta_{ij})_{1\le i\le j\le n}$. Then
\begin{align*}
&\cov\biggl(\prod X_{ij}^{\alpha_{ij}}, \prod X_{ij}^{\beta_{ij}}\biggr) \\
&= \biggl(\prod s_{ij}^{\alpha_{ij} + \beta_{ij}}\biggr) \biggl(\prod \ee(\xi_{ij}^{\alpha_{ij} + \beta_{ij}}) - \prod \ee(\xi_{ij}^{\alpha_{ij}}) \ee (\xi_{ij}^{\beta_{ij}})\biggr),
\end{align*}
where the products are taken over $1\le i\le j\le n$. 
Now note that if $\alpha_{ij} + \beta_{ij}$ is odd, then $\ee(\xi_{ij}^{\alpha_{ij} + \beta_{ij}}) = \ee(\xi_{ij}^{\alpha_{ij}}) \ee(\xi_{ij}^{\beta_{ij}}) = 0$. If $\alpha_{ij}$ and $\beta_{ij}$ are both odd, then $\ee(\xi_{ij}^{\alpha_{ij} + \beta_{ij}}) \ge 0$ and $\ee(\xi_{ij}^{\alpha_{ij}}) = \ee(\xi_{ij}^{\beta_{ij}}) = 0$. Finally, if $\alpha_{ij}$ and $\beta_{ij}$ are both even, then 
\[
\ee(\xi_{ij}^{\alpha_{ij}}) \ee(\xi_{ij}^{\beta_{ij}}) \le (\ee(\xi_{ij}^{\alpha_{ij} + \beta_{ij}}))^{\frac{\alpha_{ij}}{\alpha_{ij} + \beta_{ij}}} (\ee(\xi_{ij}^{\alpha_{ij} + \beta_{ij}}))^{\frac{\beta_{ij}}{\alpha_{ij} + \beta_{ij}}}  = \ee(\xi_{ij}^{\alpha_{ij} + \beta_{ij}}).
\]
Thus, under all circumstances, we have
\begin{equation}\label{covnonneg}
\ee(\xi_{ij}^{\alpha_{ij} + \beta_{ij}}) \ge \ee(\xi_{ij}^{\alpha_{ij}}) \ee(\xi_{ij}^{\beta_{ij}}) \ge 0.
\end{equation}
Therefore,
\[
\cov\biggl(\prod X_{ij}^{\alpha_{ij}}, \prod X_{ij}^{\beta_{ij}}\biggr) \ge c^{\frac{1}{2}\sum(\alpha_{ij}+\beta_{ij})} \cov\biggl(\prod \xi_{ij}^{\alpha_{ij}}, \prod \xi_{ij}^{\beta_{ij}}\biggr).
\]
From this, it follows easily that for any positive integer $p_n$,
\[
\var (\tr A_n^{p_n}) \ge c^{p_n} \var(\tr \Xi_n^{p_n}).
\]
Now, by \eqref{covnonneg}, 
\[
\var(\tr \Xi_n^{p_n}) \ge \frac{1}{n^{p_n}} \sum_{1\le i_1,\ldots,i_{p_n}\le n} \var(\xi_{i_1i_2}\xi_{i_2i_3}\cdots \xi_{i_{p_n}i_1}).
\]
If $i_1,\ldots,i_{p_n}$ are distinct numbers, then
\[
\var(\xi_{i_1i_2}\xi_{i_2i_3}\cdots \xi_{i_{p_n}i_1}) = \ee(\xi_{i_1i_2}^2)\ee(\xi_{i_2i_3}^2)\cdots \ee(\xi_{i_{p_n}i_1}^2) = 1.
\]
Thus,
\[
\var(\tr \Xi_n^{p_n}) \ge \frac{n(n-1)\cdots(n-p_n+1)}{n^{p_n}},
\]
and so, if $p_n$ is a sequence of integers such that $p_n = o(n^{1/2})$, then 
\begin{equation}\label{varlower}
\var (\tr A_n^{p_n}) \ge K c^{p_n},
\end{equation}
where $K$ is a positive  constant that does not vary with $n$. 


Now note that for any nonnegative integer $\alpha_{ij}$, $\ee(\xi_{ij}^{\alpha_{ij}}) \ge 0$. Thus,
\[
\ee\biggl(\prod X_{ij}^{\alpha_{ij}}\biggr) = \prod s_{ij}^{\alpha_{ij}} \ee(\xi_{ij}^{\alpha_{ij}}) \le C^{\frac{1}{2}\sum\alpha_{ij}} \ee\biggl(\prod \xi_{ij}^{\alpha_{ij}}\biggr).
\]
In particular, for any positive integer $l$,
\[
\ee( \tr A^{l}) \le C^{l/2} \ee(\tr \Xi_n^{l}).
\]
Let $\lambda_n$ denote the spectral radius of $A_n$. Then for any positive integer $m$ and any positive even integer $l$,
\begin{align*}
\ee(\lambda_n^{m}) &\le \bigl(\ee(\tr A_n^{lm})\bigr)^{1/l} \le C^{m/2}\bigl(\ee(\tr \Xi_n^{lm})\bigr)^{1/l}.
\end{align*}
Now let $l = l_n :=  2[\log n]$. If $m_n$ is a sequence of positive integers such that $m_n = o(n^{2/3}/\log n)$, it follows from 
the Sina\u\i -Soshnikov result stated above that for all $n$,
\[
\bigl(\ee(\tr \Xi_n^{l_nm_n})\bigr)^{1/l_n} \le K' 2^{m_n}n^{1/l_n} \le K 2^{m_n},
\]
where $K'$ and $K$ are constants that do not depend  on $n$. Note that we could apply the theorem because $\xi_{ij}$'s are symmetric and $\ee(\xi_{ij}^{2m}) \le (Km)^m$ for all $m$ due to the $\ml(c_1,c_2)$ assumption. The $2^{m_n}$ term arises because $\ee(\xi_{ij}^2) = 1$ instead of $1/4$ as required in the Sina\u\i -Soshnikov theorem. Combined with the previous step, this gives
\begin{equation}\label{lambdabd}
\ee(\lambda_n^{m_n}) \le K (4C)^{m_n/2}.
\end{equation}
Now let us apply Lemma \ref{wigner} to $W = \tr A_n^{p_n}$. First, let us fix $n$. We have $f(x) = x^{p_n}$, and hence $f_1(x) = p_n x^{p_n - 1}$ and $f_2(x) = p_n(p_n - 1)x^{p_n - 2}$. It follows that both $a^2$ and $ab$ are bounded by $p_n^2 (\ee(\lambda_n^{4p_n}))^{1/2}$, which according to \eqref{lambdabd}, is bounded by $Kp_n^2 (4C)^{p_n}$. On the other hand, by \eqref{varlower}, $\sigma^2$ is lower bounded by $K c^{p_n}$. Combining, and using Lemma \ref{wigner}, we get
\[
d_{TV}(W,Z) \le \frac{Kp_n^2}{\sqrt{n}} \biggl(\frac{4C}{c}\biggr)^{p_n},
\]
where $K$ is a constant depending only on $c$, $C$, $c_1$ and $c_2$, and $Z$ is a gaussian random variable with the same mean and variance as $W$. If $p_n = o(\log n)$, the bound goes to zero. 

When $W_n = \tr f(A_n)$, where $f$ is a fixed polynomial with nonnegative coefficients, the proof goes through almost verbatim, and is in fact simpler. The nonnegativity of the coefficients is required to ensure that all monomial terms are positively correlated, so that we can get a lower bound on the variance.
\end{proof}

\begin{proof}[Proof of Lemma \ref{wigner}]
Let $\mi = \{(i,j):1\le i\le j\le n\}$. Let $x = (x_{ij})_{1\le i\le j\le n}$ denote a typical element of $\rr^\mi$. For each such $x$, let $A(x) = (a_{ij}(x))_{1\le i,j\le n}$ denote the matrix whose $(i,j)^{\mathrm{th}}$ element is $n^{-1/2}x_{ij}$ if $i\le j$ and $n^{-1/2}x_{ji}$ if $i >j$.  Then the matrix $A$ considered above is simply $A(X)$, and this puts us in the setting of Theorem~\ref{matrix}. Now,
\[
\fpar{a_{ij}}{x_{kl}} = 
\begin{cases}
n^{-1/2} &\text{ if } (i,j)=(k,l) \text{ or } (i,j) = (l,k), \\
0 &\text{ otherwise.}
\end{cases}
\]
Therefore, for any matrix $B$ with $\|B\|=1$, and $1\le k\ne l\le n$,
\[
\biggl|\tr\biggl(B\fpar{A}{x_{kl}}\biggr)\biggr| = \biggl|\frac{b_{kl} + b_{lk}}{\sqrt{n}}\biggr| \le \frac{2}{\sqrt{n}}.
\]
It is clear that the same bound holds even if $k=l$.  Thus, 
\[
\gamma_0(x) \le \frac{2}{\sqrt{n}} \ \text{ for all } x\in \rr^\mi.
\]
Next, let $\mathcal{R}$ and $\mathcal{S}$ be as in \eqref{rs}, and take any $\alpha \in \mathcal{R}$, $\beta \in \mathcal{S}$. Then by the Cauchy-Schwarz inequality, we have 
\begin{align*}
\biggl|\sum_{(k,l)\in \mi} \sum_{i,j=1}^n \alpha_{kl}\beta_{ij} \fpar{a_{ij}}{x_{kl}}\biggr| &\le \frac{1}{\sqrt{n}} \sum_{(k,l)\in \mi} |\alpha_{kl}(\beta_{kl}+ \beta_{lk})|\le \frac{2}{\sqrt{n}}.
\end{align*}
Thus, 
\[
\gamma_1(x) \le \frac{2}{\sqrt{n}} \ \text{ for all } x\in \rr^\mi.
\]
Now, it is clear that $\gamma_2(x) \equiv 0$ and $r(x) \le n$. Thus, if we define $\eta_0$, $\eta_1$, and $\eta_2$ as in Theorem \ref{matrix}, and let $\lambda(x)$ be the spectral radius of $A(x)$, then for all $x\in \rr^\mi$ we have
\begin{align*}
\eta_0(x) &\le \frac{2f_1(\lambda(x))}{\sqrt{n}},\\
\eta_1(x) &\le 2f_1(\lambda(x)), \ \text{ and }\\
\eta_2(x) &\le \frac{4f_2(\lambda(x))}{n}.
\end{align*}
This gives 
\begin{align*}
\kappa_0 \le \frac{4(\ee f_1(\lambda)^4)^{1/2}}{\sqrt{n}}, \ \kappa_1 \le 2(\ee f_1(\lambda)^4)^{1/4}, \ \text{and} \ \kappa_2 \le \frac{4(\ee f_2(\lambda)^4)^{1/4}}{n}.
\end{align*}
Plugging these values into Theorem \ref{matrix}, we get the result.
\end{proof}

\subsection{Gaussian matrices with correlated entries}
Suppose we have a collection $X = (X_{ij})_{1\le i, j\le n}$ of jointly gaussian random variables with mean zero and $n^2 \times n^2$ covariance matrix $\Sigma$. Let $A = n^{-1/2}(X_{ij})_{1\le i,j\le n}$.  Note that $A$ may be non-symmetric. Limiting behavior of the spectrum in such matrices have been recently investigated by Anderson and Zeitouni \cite{andersonzeitouni06} under special structures on $\Sigma$. We have the following general result.
\begin{prop}\label{corrwigner}
Take an entire function $f$ and define $f_1$, $f_2$ as in Theorem~\ref{matrix}. Let $\lambda$ denote the  operator norm of $A$. Let $a = (\ee f_1(\lambda)^4)^{1/4}$ and $b = (\ee f_2(\lambda)^4)^{1/4}$. Suppose $W= \real \tr f(A)$ has finite fourth moment and let $\sigma^2 = \var(W)$. Let $Z$ be a normal random variable with the same mean and variance as $W$. Then
\[
d_{TV}(W, Z) \le \frac{2\sqrt{5}\|\Sigma\|^{3/2}ab}{\sigma^2n}.
\]
\end{prop}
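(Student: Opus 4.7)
The strategy is to apply the gaussian version of Theorem~\ref{matrix} directly, with index set $\mi = \{(k,l): 1\le k,l\le n\}$ and the obvious choice $a_{ij}(x) = n^{-1/2} x_{ij}$. The task then reduces to bounding the quantities $\kappa_1, \kappa_2$ that appear in that theorem. Because the map $x \mapsto A(x)$ is linear in this setting, the computation is essentially mechanical.

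First, from $a_{ij}(x) = n^{-1/2} x_{ij}$ one reads off $\partial a_{ij}/\partial x_{kl} = n^{-1/2}\mathbb{1}_{(i,j)=(k,l)}$ and $\partial^2 a_{ij}/\partial x_{kl} \partial x_{k'l'} \equiv 0$. The vanishing of all second partials gives $\gamma_2(x) \equiv 0$ immediately. For $\gamma_1$, the sup defining it collapses to
\[
\gamma_1(x) = \frac{1}{\sqrt{n}} \sup_{\alpha\in \mathcal{R},\, \beta\in \mathcal{S}} \biggl| \sum_{i,j=1}^n \alpha_{ij}\beta_{ij}\biggr| \le \frac{1}{\sqrt{n}}
\]
by Cauchy--Schwarz, since $\mathcal{R}$ and $\mathcal{S}$ are both unit spheres (in $\cc^{n\times n}$ after reindexing).

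Next, using $r(x) \le n$ and $\lambda(x) = \|A(x)\|$, the definitions of $\eta_1$ and $\eta_2$ in Theorem~\ref{matrix} yield
\[
\eta_1(x) \le \tfrac{1}{\sqrt{n}}\cdot f_1(\lambda(x))\cdot \sqrt{n} = f_1(\lambda(x)), \qquad \eta_2(x) \le 0 + \tfrac{1}{n} f_2(\lambda(x)) = \tfrac{1}{n}f_2(\lambda(x)).
\]
Taking fourth moments and fourth roots gives $\kappa_1 \le a$ and $\kappa_2 \le b/n$. Plugging these bounds into the gaussian error estimate
\[
d_{TV}(W,Z) \le \frac{2\sqrt{5}\,\|\Sigma\|^{3/2}\,\kappa_1\kappa_2}{\sigma^2}
\]
from Theorem~\ref{matrix} produces the claimed bound $2\sqrt{5}\,\|\Sigma\|^{3/2} ab/(\sigma^2 n)$.

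There is no genuine obstacle here: the absence of any symmetry constraint among the $X_{ij}$ makes $\mi$ simply the full index set $\{1,\dots,n\}^2$, and the linearity of $A$ in $x$ forces $\gamma_2 \equiv 0$ and makes $\gamma_1$ optimal. The only mild subtlety is the bookkeeping needed to identify $\mathcal{S}$ (indexed by matrices) with $\mathcal{R}$ (indexed by $\mi$) when applying Cauchy--Schwarz to $\gamma_1$; once this identification is made, the entire proof is a direct specialization of Theorem~\ref{matrix}.
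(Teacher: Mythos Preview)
Your proof is correct and follows the same route as the paper, which simply remarks that the computations of $\kappa_1$ and $\kappa_2$ are as in the Wigner case and then invokes the gaussian part of Theorem~\ref{matrix}. In fact you are slightly more careful: because the matrix here is not constrained to be symmetric, the index set is all of $\{1,\dots,n\}^2$ and each $x_{kl}$ influences only $a_{kl}$, giving $\gamma_1\le 1/\sqrt{n}$ rather than the $2/\sqrt{n}$ of the symmetric Wigner setup---which is exactly what is needed to hit the stated constant $2\sqrt{5}$.
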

\begin{proof}
The computations of $\kappa_1$ and $\kappa_2$ are  exactly the same as for Wigner matrices. The only difference is that we now apply the second part of Theorem \ref{matrix}. 
\end{proof}
Of course, the limiting behavior of $\sigma^2$ is not known, so this does not prove a central limit theorem as long as such results are not established.  The term $\|\Sigma\|^{3/2}$ can often be handled by the well-known Gershgorin bound for the operator norm:
\[
\|\Sigma\|\le \max_{1\le i,j\le n} \sum_{k,l=1}^n |\sigma_{ij,kl}|,
\]
where $\sigma_{ij,kl} = \cov(X_{ij}, X_{kl})$. The next example gives a concrete application of the above result.

\subsection{Gaussian Toeplitz matrices}
Fix a number $n$ and let $X_0,\ldots,X_{n-1}$ be independent standard gaussian random variables. Let $A_n$ be the matrix
\[
A_n := n^{-1/2}(X_{|i-j|})_{1\le i,j\le n}.
\]
This is a gaussian Toeplitz matrix, of the kind recently considered in Bryc, Dembo, and Jiang \cite{brycdembojiang06} and also in M.\ Meckes \cite{meckes06} and Bose and Sen \cite{bosesen07}. Although Toeplitz determinants have been extensively studied (see e.g.\ Basor \cite{basor05} and references therein), to the best of our knowledge, there are no existing central limit theorems for general linear statistics of eigenvalues of random Toeplitz matrices. We have the following result. 
\begin{thm}\label{toeplitz}
Consider the gaussian Toeplitz matrices defined above. Let $p_n$ be a sequence of positive integers such that $p_n = o(\log n/\log \log n)$. Let $W_n = \tr (A_n^{p_n})$. Then, as $n \ra \infty$,
\[
\frac{W_n - \ee(W_n)}{\sqrt{\var(W_n)}} \ \text{ converges in total variation to } \ N(0,1).
\]
Moreover, there exists a positive constant $C$ such that $\var(W_n) \ge (C/p_n)^{p_n}n$ for all $n$. The  central limit theorem also holds for $W_n = \tr f(A_n)$, when $f$ is a fixed nonzero polynomial with nonnegative coefficients. In that case, $\var(W_n)\ge Cn$ for some positive constant $C$ depending on $f$. 
\end{thm}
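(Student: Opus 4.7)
The plan is to apply the gaussian version of Theorem~\ref{matrix} (with $\Sigma = I$) to the representation $a_{ij}(x) = n^{-1/2}x_{|i-j|}$ and combine the resulting error bound with a combinatorial lower bound on $\sigma^2 = \var(W_n)$. Linearity of $a_{ij}$ in $x$ immediately gives $\gamma_2(x)\equiv 0$. For $\gamma_1$, using $\partial a_{ij}/\partial x_u = n^{-1/2}\mathbf{1}(|i-j|=u)$ together with Cauchy--Schwarz in the $u$-sum, followed by the elementary bound $\bigl|\sum_{|i-j|=u}\beta_{ij}\bigr|^2 \le 2n\sum_{|i-j|=u}|\beta_{ij}|^2$ (each diagonal has at most $2n$ entries), yields the uniform estimate $\gamma_1(x)\le\sqrt 2$. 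Hence $\eta_1(x)\le\sqrt{2n}\,f_1(\lambda(x))$ and $\eta_2(x)\le 2 f_2(\lambda(x))$ with $\lambda = \|A_n\|$. By M.~Meckes~\cite{meckes06}, $\ee\|A_n\| = O(\sqrt{\log n})$; standard gaussian concentration applied to the Lipschitz map $X\mapsto\|A_n(X)\|$ then upgrades this to $\ee\lambda^{4p_n}\le(K\log n)^{2p_n}$ throughout our range of $p_n$. Plugging $f(z)=z^{p_n}$ into the expressions for $\kappa_1,\kappa_2$ and invoking Theorem~\ref{matrix} yields
\[
d_{TV}(W_n, Z) \le \frac{C\sqrt n\,p_n^3\,(K\log n)^{p_n}}{\sigma^2}.
\]

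The principal difficulty is the variance lower bound $\sigma^2 \ge (C/p_n)^{p_n}\,n$. My plan is to invoke the iterated gaussian Poincar\'e inequality for polynomial Wiener functionals: since $W_n$ has chaos degree at most $p_n$, orthogonality of the Hermite decomposition yields $\var(W_n) \ge ((p_n-k)!/p_n!)\,\ee\|\nabla^{(k)}W_n\|_{HS}^2$ for every $1\le k\le p_n$. Choosing $k$ with the same parity as $p_n$ (so that the deterministic part of $\ee\nabla^{(k)}W_n$ is generically nonzero) and passing to the expectation,
\[
\sigma^2 \ge \frac{(p_n-k)!}{p_n!}\sum_{j_1,\dots,j_k}\bigl(\ee\,\partial^k W_n/\partial X_{j_1}\cdots\partial X_{j_k}\bigr)^2.
\]
Each inner expectation is computed by expanding $\partial^k W_n$ as a sum of traces $\tr(A_n^{p_n-k}T_{j_{\sigma(1)}}\cdots T_{j_{\sigma(k)}})$ (where $T_j$ is the indicator of the $j$th diagonal of $A_n$) and invoking Wick's formula to pair the remaining $p_n-k$ factors of $A_n$, reducing the problem to an explicit count of closed-walk configurations. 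Taking $k$ on the order of $p_n$, summation over the free indices $j_\ell\in\{0,\dots,n-1\}$ provides one factor of $n$ from the unconstrained starting position of the underlying walk, while the prefactor $(p_n-k)!/p_n!$ supplies the $(C/p_n)^{p_n}$ rate. For the second assertion, with $W_n = \tr f(A_n)$ and $f$ a polynomial of fixed degree $d$ with nonnegative coefficients, the positive-correlation inequality $\ee X^{a+b}\ge\ee X^a\,\ee X^b$ for symmetric laws (exactly as in the proof of Theorem~\ref{wignerthm}) reduces the lower bound to that for $\var\tr A_n^d$, yielding $\sigma^2 \ge Cn$.

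Combining the two estimates produces
\[
d_{TV}(W_n,Z) \le \frac{C p_n^3\,(K p_n\log n)^{p_n}}{\sqrt n},
\]
which tends to $0$ precisely when $p_n[\log p_n + \log\log n] = o(\log n)$, matching the hypothesis $p_n = o(\log n/\log\log n)$ and giving the CLT in total variation.
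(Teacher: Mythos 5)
Your normal-approximation error bound is correct and is essentially the paper's computation, just organized a little differently: you feed the map $x\mapsto A_n(x)=n^{-1/2}(x_{|i-j|})$ directly into the gaussian case of Theorem~\ref{matrix} with $\Sigma=I$ (getting $\gamma_1\le\sqrt2$, $\gamma_2\equiv0$, hence $\eta_1\le\sqrt{2n}\,f_1(\lambda)$, $\eta_2\le 2f_2(\lambda)$), whereas the paper views the $n^2$ matrix entries as a correlated gaussian vector and passes through Proposition~\ref{corrwigner}, picking up $\|\Sigma\|\le 2n$ via Gershgorin. Both yield a bound of the same order, $d_{TV}\le C\,p^{O(1)}\,(Kp\log n)^{p}\sqrt n/\sigma^2$, using M.~Meckes's estimate $\ee\|A_n\|=O(\sqrt{\log n})$ together with gaussian concentration for the Lipschitz map $X\mapsto\|A_n\|$.

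The gap is in the variance lower bound, which is the actual content of the theorem (the error bound alone proves nothing without $\sigma^2\gtrsim (C/p)^p n$). Your Wiener-chaos/iterated-Poincar\'e route is not wrong in principle, but the key step---that $\sum_{\vec j}\bigl(\ee\,\partial^k W_n/\partial X_{j_1}\cdots\partial X_{j_k}\bigr)^2\gtrsim n$ with the right $p$-dependence---is asserted, not proved, and it hides all the difficulty. In fact for any $k<p_n$ the top-chaos component of $W_n$ contributes \emph{nothing} to $\ee\,\partial^k W_n$ (the expectation of a $D^k$ of pure $p$-chaos vanishes when $k<p$), so ``$k$ on the order of $p_n$'' is not good enough; you must take $k=p_n$ exactly, and then $\partial^{p_n}W_n$ is a deterministic count of closed walks, which still has to be carried out. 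Once you commit to $k=p_n$, you are effectively redoing, in chaos language, the explicit combinatorial construction that the paper performs directly: the paper defines, for each ordered tuple of \emph{distinct} increments $a_1,\dots,a_{p-1}\le\lceil n/3p\rceil$, the class $D_{a_1,\dots,a_{p-1}}$ of index tuples $(i_1,\dots,i_p)$ with prescribed consecutive differences and $i_1\le\lceil n/3\rceil$; every tuple in such a class produces the \emph{same} monomial $X_{a_1}\cdots X_{a_{p-1}}X_{a_1+\cdots+a_{p-1}}$, so the class sum has variance $\lceil n/3\rceil^2$; positive correlation of all monomials (exactly as in Theorem~\ref{wignerthm}) then gives $\var(W_n)\ge n^{-p}\sum_{\vec a}\lceil n/3\rceil^2\ge (C/p)^p n$. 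That argument is entirely elementary (no Malliavin derivatives, no Wick expansions, no Hermite decomposition) and extends to the polynomial case by keeping only the top-degree term. You should either carry your Wick computation through to the same walk count, or replace it with the paper's direct positive-correlation argument.
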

\noindent {\it Remarks.} (i) Note that the theorem is only for {\it gaussian} Toeplitz matrices. In fact, considering the function $f(x)=x$, we see that a CLT need not always hold for linear statistics of non-gaussian Toeplitz matrices.

(ii) This is an example of a matrix ensemble where nothing is known about the limiting formula for $\var(W_n)$. Theorem \ref{matrix} enables us to prove the CLT even without knowing the limit of $\var(W_n)$. As before, this is possible because we can easily get lower bounds on $\var(W_n)$.
\begin{proof}
In the notation of the previous subsection, 
\[
\sigma_{ij,kl} = 
\begin{cases}
1 &\text{ if } |i-j|=|k-l|,\\
0 &\text{ otherwise.}
\end{cases}
\]
Thus,
\[
\|\Sigma\|\le \max_{i,j} \sum_{k,l} |\sigma_{ij,kl}| \le 2n.
\]
Let $\lambda_n$ denote the spectral norm of $A_n$. Using Proposition \ref{corrwigner} and the above bound on $\|\Sigma\|$, we have
\begin{equation}\label{toepbd}
d_{TV}(W_n,Z_n) \le \frac{Cp_n^2 (\ee \lambda_n^{4p_n})^{1/2}\sqrt{n}}{\var(W_n)},
\end{equation}
where $Z_n$ is a gaussian random variable with the same mean and variance as $W_n$ and $C$ is a universal constant.

In the rest of the argument we will write $p$ instead of $p_n$ to ease notation. First, note that
\[
W_n = \tr(A_n^p) = n^{-p/2}\sum_{1\le i_1,\ldots,i_p\le n}X_{|i_1-i_2|}X_{|i_2-i_3|}\cdots X_{|i_p-i_1|}.
\]
As in the proof of Theorem \ref{wignerthm}, it is easy to verify that all terms in the above sum are positively correlated with each other, and hence, for any partition $\mathcal{D}$ of the set $\{1,\ldots,n\}^p$ into disjoint subcollections,
\begin{equation}\label{varbdd}
\var(W_n) \ge n^{-p}\sum_{D\in \mathcal{D}} \var\biggl(\sum_{(i_1,\ldots,i_p)\in D}X_{|i_1-i_2|}X_{|i_2-i_3|}\cdots X_{|i_p-i_1|}\biggr).
\end{equation}
For any collection of distinct positive integers $1\le a_1,\ldots,a_{p-1}\le \lceil n/3p\rceil $, let $D_{a_1,\ldots,a_{p-1}}$ be the set of all $1\le i_1,\ldots,i_p\le n$ such that $i_{k+1}- i_k = a_k$ for $k=1,\ldots,p-1$ and $1\le i_1\le \lceil n/3 \rceil $. Clearly, $|D_{a_1,\ldots, a_{p-1}}| = \lceil n/3\rceil$.  Again, since the $a_i$'s are distinct,
\begin{align*}
&\var\biggl(\sum_{(i_1,\ldots,i_p)\in D_{a_1,\ldots,a_{p-1}}}X_{|i_1-i_2|}X_{|i_2-i_3|}\cdots X_{|i_p-i_1|}\biggr) \\
&= |D_{a_1,\ldots,a_{p-1}}|^2 \var(X_{a_1}\cdots X_{a_{p-1}} X_{a_1+\cdots+a_{p-1}}) = |D_{a_1,\ldots,a_{p-1}}|^2 \ge \frac{n^2}{9}.
\end{align*}
Next, note that the number of ways to choose $a_1,\ldots, a_{p-1}$ satisfying the restrictions is 
\[
\lceil n/3p\rceil (\lceil n/3p \rceil - 1)\cdots (\lceil n/3p\rceil - p +2 ).
\]
Since we can assume, without loss of generality, that $n\ge 4 p^2$, the above quantity can be easily seen to be lower bounded by $(n/12p)^{p-1}$. Finally, noting that if $(a_1,\ldots,a_{p-1})\ne (a_1',\ldots,a_{p-1}')$, then $D_{a_1,\ldots,a_{p-1}}$ and $D_{a_1',\ldots,a_{p-1}'}$ are disjoint, and applying \eqref{varbdd}, we get
\begin{equation}\label{toepvarbd}
\var(W_n) \ge n^{-p} \frac{n^{p-1}}{(12p)^{p-1}} \frac{n^2}{9} \ge \frac{C^pn}{p^p},
\end{equation}
where $C$ is a positive universal constant. 

Next, let $\lambda_n$ denote the spectral norm of $A_n$. By Theorem 1 of M.~Meckes \cite{meckes06}, we know that~$\ee(\lambda_n) \le C\sqrt{\log n}$. Now, it is easy to verify that the map $(X_0,\ldots,X_{n-1})\mapsto \lambda_n$ has Lipschitz constant bounded irrespective of $n$. By standard gaussian concentration results (e.g.\ Ledoux \cite{ledoux01}, Sections 5.1-5.2), it follows that for any $k$,
\[
\ee|\lambda_n - \ee(\lambda_n)|^k \le C^{k/2} k^{k/2},
\]
where, again, $C$ is a universal constant. Combining with result for $\ee(\lambda_n)$, it follows that for any $n$ and $k$,
\[
\ee(\lambda_n^k) \le (C k\log n)^{k/2}.
\]
Thus, the term $p^2 (\ee\lambda_n^{4p})^{1/2}$ in \eqref{toepbd} is bounded by $p^2 (Cp\log n)^p$. Therefore, from \eqref{toepbd} and \eqref{toepvarbd}, it follows that
\[
d_{TV}(W_n, Z_n) \le \frac{C^p p^{2p+2} (\log n)^p}{\sqrt{n}},
\]
where $C$ is a universal constant. Clearly, if $p = o(\log n/\log \log n)$, this goes to zero. This completes the proof for $W_n = \tr(A_n^{p_n})$. When $W_n = \tr f (A_n)$, where $f$ is a fixed polynomial with nonnegative coefficients, the proof goes through exactly as above. If $f(x) = c_0 + \cdots + c_k x^k$, the nonnegativity of the coefficients ensures that $\var(W_n)\ge c_k^2\var (\tr A_n^k)$, and we can re-use the bounds computed before to show that $\var(W_n) \ge C(f)n$. The rest is similar.
\end{proof}


\subsection{Wishart matrices}
Let $n\le N$ be two positive integers, and let $X = (X_{ij})_{1\le i\le n, 1\le j\le N}$ be a collection of independent random variables in $\ml(c_1,c_2)$ for some finite $c_1,c_2$. Let
\[
A = N^{-1} X X^t.
\]
In statistical parlance, the matrix $A$ is called the {\it Wishart matrix} or {\it sample covariance matrix} corresponding to the {\it data matrix} $X$. Just as in the Wigner case, linear statistics of eigenvalues of Wishart matrices also satisfy unnormalized  central limit theorems under certain conditions. This was proved for polynomial $f$ by Jonsson \cite{jonsson82}, and for a much larger class of functions in Bai and Silverstein \cite{baisilverstein04}. A different proof was recently given by Anderson and Zeitouni \cite {andersonzeitouni06a}. We have the following error bound.
\begin{prop}\label{covprop}
Let $\lambda$ be the largest eigenvalue of $A$. 
Take any entire function $f$ and define $f_1$, $f_2$ as in Theorem~\ref{matrix}. Let $a = (\ee(f_1(\lambda)^4\lambda^2))^{1/4}$ and $b = (\ee(f_1(\lambda) + 2n^{-1/2}f_2(\lambda)\lambda)^4)^{1/4}$.
Suppose $W= \real \tr f(A)$ has finite fourth moment and let $\sigma^2 = \var(W)$. Let $Z$ be a normal random variable with the same mean and variance as $W$. Then
\[
d_{TV}(W, Z) \le \frac{8\sqrt{5}}{\sigma^2}\biggl(\frac{c_1 c_2 a^2\sqrt{n}}{N} + \frac{c_1^3 ab n}{N^{3/2}}\biggr).
\]
If we now change the setup and assume that the entries of $X$ are jointly gaussian with mean $0$ and $nN \times nN$ covariance matrix $\Sigma$, keeping all other notation the same, then the corresponding bound is
\[
d_{TV}(W, Z) \le \frac{8\sqrt{5}\|\Sigma\|^{3/2} ab n}{\sigma^2N^{3/2}}.
\]
\end{prop}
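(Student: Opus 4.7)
The plan is to apply Theorem \ref{matrix} directly with index set $\mi = \{(i,j): 1 \le i \le n,\ 1\le j\le N\}$ and $A(x) = N^{-1} x x^t$, viewing $x\in\rr^\mi$ as an $n\times N$ matrix. Differentiating $a_{kl}(x) = N^{-1}\sum_m x_{km}x_{lm}$ yields the first and second partials
\[
\fpar{a_{kl}}{x_{ij}} = N^{-1}(\delta_{ki}x_{lj}+\delta_{li}x_{kj}),\qquad \mpar{a_{kl}}{x_{ij}}{x_{i'j'}} = N^{-1}\delta_{jj'}(\delta_{ki}\delta_{li'}+\delta_{li}\delta_{ki'}),
\]
after which the proof reduces to bounding $\gamma_0,\gamma_1,\gamma_2$, assembling $\eta_0,\eta_1,\eta_2$, and extracting $\kappa_0,\kappa_1,\kappa_2$ in the form required by the stated quantities $a$ and $b$.

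For $\gamma_0$ one checks that $\tr(B\,\partial A/\partial x_{ij}) = N^{-1}((B+B^t)X)_{ij}$, so $|\tr(B\,\partial A/\partial x_{ij})| \le N^{-1}\|B+B^t\|\,\|X\| \le 2\|X\|/N$. For $\gamma_1$, the sum rearranges to $N^{-1}[\tr(X\alpha^t\beta) + \tr(X\alpha^t\beta^t)]$, with $\alpha$ and $\beta$ viewed as unit Hilbert--Schmidt matrices of sizes $n\times N$ and $n\times n$; applying Lemma \ref{trivial} with $i=2,j=3$ gives $|\tr(X\alpha^t\beta)|\le \|X\|\|\alpha\|_{HS}\|\beta\|_{HS}=\|X\|$ and hence $\gamma_1\le 2\|X\|/N$. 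Using $\|X\|^2 = N\lambda$ (since $\lambda = \|A\| = N^{-1}\|X\|^2$), both bounds read $2\sqrt{\lambda/N}$. For $\gamma_2$, the factor $\delta_{jj'}$ collapses the $j$-sum so the expression becomes $N^{-1}[\langle\alpha(\alpha')^t,\beta\rangle + \langle\alpha(\alpha')^t,\beta^t\rangle]$, and Cauchy--Schwarz with $\|\alpha(\alpha')^t\|_{HS}\le \|\alpha\|_{HS}\|\alpha'\|_{HS}=1$ yields $\gamma_2\le 2/N$, with no $\lambda$-dependence at all.

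Using $r(x) = \mathrm{rank}(A) \le n$, I would then obtain
\[
\eta_0 \le 2\sqrt{\lambda/N}\,f_1(\lambda),\quad \eta_1 \le 2\sqrt{n\lambda/N}\,f_1(\lambda),\quad \eta_2 \le \frac{2\sqrt{n}}{N}\bigl(f_1(\lambda)+2n^{-1/2}\lambda f_2(\lambda)\bigr).
\]
Taking fourth moments and matching the definitions of $a$ and $b$ gives $\kappa_0 \le 4\sqrt{n}\,a^2/N$, $\kappa_1 \le 2\sqrt{n/N}\,a$, and $\kappa_2 \le 2\sqrt{n}\,b/N$. Plugging these into the first conclusion of Theorem \ref{matrix} produces the stated bound with constant $8\sqrt{5}$; plugging into the second conclusion, with the $\|\Sigma\|^{3/2}$ factor, produces the gaussian variant.

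The main obstacle is ensuring that the $\gamma_1$ bound is in terms of $\|X\|$ rather than $\|X\|_{HS}$. A direct Cauchy--Schwarz on $\langle\beta^t\alpha,X\rangle_{HS}$ would give only $\|X\|_{HS}\le \sqrt{n}\|X\|$, spoiling the exponent of $n$ in both $\kappa_1$ and the final bound; the three-factor form of Lemma \ref{trivial} is precisely the tool that avoids this loss. Likewise, the $\delta_{jj'}$-induced collapse in $\gamma_2$ is essential to keep the second term of $\eta_2$ at order $n^{-1/2}\lambda f_2(\lambda)$ rather than something larger. With these two sharp estimates in hand, the rest of the argument is routine bookkeeping.
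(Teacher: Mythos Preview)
Your proposal is correct and follows essentially the same route as the paper: the same index set, the same first- and second-order derivative formulas (written in Kronecker-delta form rather than the paper's outer-product notation $e_{np}e_{Nq}^t$), the same bounds $\gamma_0,\gamma_1\le 2\sqrt{\lambda/N}$ and $\gamma_2\le 2/N$, and the identical assembly of $\eta_i$ and $\kappa_i$ before invoking Theorem~\ref{matrix}. In particular, your emphasis on using Lemma~\ref{trivial} to extract $\|X\|$ rather than $\|X\|_{HS}$ in the $\gamma_1$ bound is exactly the pivotal step in the paper's argument as well.
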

\noindent {\it Remarks.}
(i) As in the Wigner case, it is well known that under mild conditions, $\lambda = O(1)$ as $n, N \ra \infty$ with $n/N\ra c\in [0,1)$. We refer to Section 2.2.2 in the survey article \cite{bai99} for details. It follows that $a$ and $b$ are~$O(1)$.

(ii) It is shown in \cite{baisilverstein04} that in the case of independent entries, if $n/N \ra c\in (0,1)$, then $\sigma^2$ converges to a finite positive constant under fairly general conditions (an explicit formula for the limit is also available). Therefore under such conditions, the first bound above is of order~$1/\sqrt{N}$. 

(iii) We should remark that the spectrum of $XX^t$ is often studied by studying the block matrix
\[
\biggl(
\begin{array}{cc}
0 & X\\
X^t & 0
\end{array}
\biggr),
\]
because
\[
\biggl(
\begin{array}{cc}
0 & X\\
X^t & 0
\end{array}
\biggr)^2 = 
\biggl(
\begin{array}{cc}
XX^t & 0\\
0 & X^tX
\end{array}
\biggr). 
\]
Thus, in principle, we can derive Proposition \ref{covprop} using the information contained in Lemma \ref{wigner}. However, for expository purposes, we prefer carry out the explicit computations necessary for applying Theorem  \ref{matrix} without resorting to the above trick. The computations will also be helpful in dealing with the double Wishart case in the next subsection.
\begin{proof}[Proof of Proposition \ref{covprop}]
First, let us define the indexing set
\[
\mi = \{(p,q): p=1,\ldots,n, q =1,\ldots, N\}.
\]
From now on, we simply write $pq$ instead of $(p,q)$. Let $x = (x_{pq})_{pq\in \mi}$ be a typical element of $\rr^\mi$. In the following, the collection $x$ is used as a matrix, and it seems that the only way to avoid confusion is to write $X$ instead of~$x$, so we do that. Generally, there is no harm in confusing this $X$ with the collection of random variables defined at the onset.

Let $\gamma_0$, $\gamma_1$, and $\gamma_2$ be defined as in \eqref{gamma}. For each $m$ and $i$, let $e_{mi}$ be the $i^{\mathrm{th}}$ coordinate vector in $\rr^m$, i.e.\ the vector whose $i^{\mathrm{th}}$ component is $1$ and the rest are zero. Then
\[
\fpar{A}{x_{pq}} = N^{-1}( e_{np}e_{Nq}^t X^t + X e_{Nq}e_{np}^t),
\]
and 
\begin{equation}\label{der2}
\begin{split}
\mpar{A}{x_{pq}}{x_{rs}} &= N^{-1}(e_{np}e_{Nq}^t e_{Ns}e_{nr}^t + e_{nr} e_{Ns}^t e_{Nq} e_{np}^t)\\
&=
\begin{cases}
N^{-1}(e_{np}e_{nr}^t + e_{nr} e_{np}^t) &\text{ if } q=s,\\
0 &\text{ otherwise.}
\end{cases}
\end{split}
\end{equation}
Now take any $n\times n$ matrix $B$ with $\|B\| = 1$. Then for any $p,q$,
\begin{align*}
\biggl|\tr\biggl(B\fpar{A}{x_{pq}}\biggr)\biggr| &= N^{-1} |\tr(B e_{np}e_{Nq}^t X^t + B Xe_{Nq} e_{np}^t)|\\
&= N^{-1}| e_{Nq}^t X^tBe_{np} + e_{np}^t BX e_{Nq}| \\
&\le 2N^{-1}\|B\|\|X\| = 2\sqrt{\frac{\lambda}{N}}.
\end{align*}
This shows that
\[
\gamma_0 \le 2\sqrt{\frac{\lambda}{N}}.
\]
Next, let $\alpha = (\alpha_{pq})_{1\le p\le n, 1\le q\le N}$, $\alpha' = (\alpha_{pq})_{1\le p\le n, 1\le q\le N}$, and $\beta = (\beta_{ij})_{1\le i, j\le n}$  be arbitrary matrices of complex numbers such that $\|\alpha\|_{HS} = \|\alpha'\|_{HS} = \|\beta\|_{HS} = 1$. Then
\begin{align*}
&\biggl|\sum_{p=1}^n \sum_{q=1}^N \sum_{i,j=1}^n \alpha_{pq}\beta_{ij} \fpar{a_{ij}}{x_{pq}}\biggr|\\
&= N^{-1}\biggl|\sum_{p=1}^n \sum_{q=1}^N \sum_{i,j=1}^n \alpha_{pq}\beta_{ij} e_{ni}^t(e_{np}e_{Nq}^t X^t + X e_{Nq} e_{np}^t)e_{nj}\biggr|\\
&= N^{-1}\biggl|\sum_{p=1}^n \sum_{q=1}^N \sum_{j=1}^n\alpha_{pq}\beta_{pj}x_{jq} + \sum_{p=1}^n \sum_{q=1}^N \sum_{i=1}^n\alpha_{pq}\beta_{ip}x_{iq}\biggr|\\
&= N^{-1}\bigl|\tr(\alpha X^t \beta^t) + \tr(\alpha X^t \beta)\bigr|.
\end{align*}
By Lemma \ref{trivial}, we have  
\begin{align*}
|\tr(\alpha X^t \beta^t)+\tr(\alpha X^t \beta)| &\le 2\|\alpha\|_{HS}\|X\| \|\beta\|_{HS} = 2\sqrt{N\lambda}.
\end{align*}
Thus,
\[
\gamma_1 \le 2\sqrt{\frac{\lambda}{N}}.
\]
Again, by the formula \eqref{der2} for second derivatives of $A$, 
\begin{align*}
&\biggl|\sum_{p,r=1}^n\sum_{q,s=1}^N \sum_{i,j=1}^n \alpha_{pq}\alpha_{rs}'\beta_{ij}\mpar{a_{ij}}{x_{pq}}{x_{rs}}\biggr| \\
&= N^{-1}\biggl|\sum_{p,r=1}^n\sum_{q=1}^N \sum_{i,j=1}^n \alpha_{pq}\alpha_{rq}'\beta_{ij}e_{ni}^t(e_{np}e_{nr}^t + e_{nr}e_{np}^t)e_{nj}\biggr|\\
&= N^{-1}\biggl|\sum_{p,r=1}^n\sum_{q=1}^N \alpha_{pq}\alpha_{rq}'(\beta_{pr}+ \beta_{rp}) \biggr|\\
&= N^{-1}\bigl|\tr(\alpha{\alpha'}^t \beta^t) + \tr(\alpha{\alpha'}^t \beta)\bigr| \le 2N^{-1}\|\alpha\|_{HS}\|\alpha'\|_{HS} \|\beta\|_{HS}.
\end{align*}
This shows that 
\[
\gamma_2\le \frac{2}{N}.
\]
Finally, note that $\mathrm{rank}(A) \le n$. Combining the bounds we get
\begin{align*}
\eta_0 \le 2f_1(\lambda)\sqrt{\frac{\lambda}{N}}, \ \ \eta_1 \le 2f_1(\lambda)\sqrt{\frac{\lambda n}{N}}, \ \text{ and } \ \eta_2 \le \frac{2f_1(\lambda)\sqrt{n}}{N} + \frac{4f_2(\lambda)\lambda}{N}.
\end{align*}
From this, we get
\begin{align*}
\kappa_0 &\le \frac{4\sqrt{n}}{N}(\ee (f_1(\lambda)^4\lambda^2))^{1/2},\\
\kappa_1 &\le \frac{2\sqrt{n}}{\sqrt{N}}(\ee(f_1(\lambda)^4 \lambda^2))^{1/4},\ \ \text{and}\\
\kappa_2 &\le \frac{2\sqrt{n}}{N}(\ee(f_1(\lambda) + 2n^{-1/2}f_2(\lambda) \lambda)^4)^{1/4}.
\end{align*}
With the aid of Theorem \ref{matrix}, this completes the proof.
\end{proof}

\subsection{Double Wishart matrices}
Let $n\le N\le M$ be three positive integers, and let $X = (X_{ij})_{1\le i\le n, 1\le j\le N}$ be and $Y = (Y_{ij})_{1\le i\le n, 1\le j\le M}$ be two collections of independent random variables in $\ml(c_1,c_2)$ for some finite $c_1,c_2$. Let
\[
A = X X^t (YY^t)^{-1}.
\]
A matrix like $A$ is called a {\it double Wishart matrix}. Double Wishart matrices are very important in statistical theory of canonical correlations (see the discussion in Section 2.2 of \cite{johnstone06}). 

If the matrices $X$ and $Y$ had independent standard gaussian entries, the matrix $XX^t(XX^t + YY^t)^{-1}$ would be known as a {\it Jacobi} matrix. In a recent preprint, Jiang \cite{jiang06} proves the CLT for the Jacobi ensemble. We have the following result.
\begin{prop}\label{double}
Let $\lambda_x$ and $\lambda_y$ be the largest eigenvalues of $N^{-1}XX^t$ and $M^{-1}YY^t$, and let $\delta_y$ be the smallest eigenvalue of $M^{-1}YY^t$. Let $\lambda = \max\{1,\lambda_x,\lambda_y, \delta_y^{-1}\}$.  
Take any entire function $f$ and define $f_1$, $f_2$ as in Theorem~\ref{matrix}. Let $a = (\ee(f_1(\lambda)^4 \lambda^{14}))^{1/4}$ and $$b = (\ee(4f_1(\lambda)\lambda^5 + 2n^{-1/2}f_2(\lambda) \lambda^7)^4)^{1/4}.$$
Suppose $W= \real \tr f(A)$ has finite fourth moment and let $\sigma^2 = \var(W)$. Let $Z$ be a normal random variable with the same mean and variance as $W$. Then
\[
d_{TV}(W, Z) \le \frac{4\sqrt{10}}{\sigma^2}\biggl(\frac{c_1 c_2 a^2N\sqrt{n}}{M^2} + \frac{2c_1^3 ab \sqrt{N}n}{M^2}\biggr).
\]
\end{prop}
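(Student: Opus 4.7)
The plan is to apply the first (non-gaussian) bound of Theorem \ref{matrix} to $g(X,Y):=\real\tr f(A(X,Y))$ with indexing set $\mi=\{(X,p,q):1\le p\le n,\ 1\le q\le N\}\cup\{(Y,r,s):1\le r\le n,\ 1\le s\le M\}$, mirroring the Wishart computation of Proposition \ref{covprop}. The first task is to differentiate $A=XX^tC^{-1}$ with $C:=YY^t$. For the $X$-variables only $XX^t$ is affected, so $\partial A/\partial X_{pq}=(e_{np}e_{Nq}^tX^t+Xe_{Nq}e_{np}^t)C^{-1}$, where $e_{mi}$ denotes the $i$-th coordinate vector in $\rr^m$. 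For the $Y$-variables the identity $\partial C^{-1}/\partial Y_{rs}=-C^{-1}(\partial C/\partial Y_{rs})C^{-1}$, with $\partial C/\partial Y_{rs}=e_{nr}e_{Ms}^tY^t+Ye_{Ms}e_{nr}^t$, yields $\partial A/\partial Y_{rs}=-XX^tC^{-1}(e_{nr}e_{Ms}^tY^t+Ye_{Ms}e_{nr}^t)C^{-1}$. The pure-$X$, mixed, and pure-$Y$ second derivatives follow from repeated application of the same two identities; the pure-$Y$ case is heaviest, producing four matrix-valued terms because each of the two inner $C^{-1}$ factors must be differentiated in turn.

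With the formulas in hand, the next step is to bound $\gamma_0,\gamma_1,\gamma_2$ from \eqref{gamma} by applying Lemma \ref{trivial} to every trace, bilinear, and trilinear expression that appears. The three a priori estimates that drive the entire argument are $\|X\|\le\sqrt{N\lambda}$, $\|Y\|\le\sqrt{M\lambda}$, and $\|C^{-1}\|\le\lambda/M$, each immediate from the definition $\lambda=\max\{1,\lambda_x,\lambda_y,\delta_y^{-1}\}$. The discipline in every invocation of Lemma \ref{trivial} is to reserve the two Hilbert--Schmidt slots for the unit-HS vectors $\alpha,\alpha',\beta$ of \eqref{gamma}, and to bound every $X$, $Y$, $C^{-1}$, and basis vector $e_{\cdot\cdot}$ in operator norm. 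Compared to the Wishart case, every $Y$-derivative carries the extra product $\|X\|^2\|C^{-1}\|^2\|Y\|$ coming from the outer $XX^tC^{-1}$, the trailing $C^{-1}$, and the differentiated $Y$-factor; this is precisely what inflates the Wishart prefactor $1/N$ to $N/M^2$ and introduces most of the additional powers of $\lambda$.

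To assemble $\eta_0,\eta_1,\eta_2$, I would use the rough bound $\|A\|\le\|X\|^2\|C^{-1}\|\le(N/M)\lambda^2\le\lambda^2$ (since $N\le M$) and the monotonicity of the nonnegative power series $f_1,f_2$ on $[0,\infty)$ to replace $f_i(\|A\|)$ by $f_i(\lambda^2)$, then absorb the excess $\lambda$'s into the polynomial factor accompanying $f_i(\lambda)$. With $r(x)\le n$, the resulting pointwise bounds on $\eta_0,\eta_1,\eta_2$ feed into the fourth-moment definitions of $\kappa_0,\kappa_1,\kappa_2$; a Cauchy--Schwarz step consolidates the random factors into the two quantities $a^2N\sqrt{n}/M^2$ and $ab\sqrt{N}\,n/M^2$ stated in the proposition, with the exponent $14$ in $a$ and the $\lambda^5,\lambda^7$ in the two summands of $b$ accounting for the total power of $\lambda$ carried by the heaviest terms. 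Substitution into the first bound of Theorem \ref{matrix} then yields the claimed estimate.

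The main obstacle is the bookkeeping for the pure-$Y$ second derivatives: after applying the product rule to $C^{-1}(\,\cdot\,)C^{-1}$, one obtains four matrix products of length six or seven, each of which must be contracted correctly against $\alpha,\alpha',\beta$ inside the trilinear form defining $\gamma_2$. A single misplaced assignment of an $X$, $Y$, or $C^{-1}$ factor to a Hilbert--Schmidt slot in place of an $\alpha$-type factor would introduce an uncontrolled $\sqrt n$, $\sqrt N$, or $\sqrt M$ and break the final estimate, so verifying that every term admits a split for which the $\alpha$-factors are the ones absorbed in HS norm is where the bulk of the work lies.
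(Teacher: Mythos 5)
Your proposal is structurally identical to the paper's proof: it sets up the same derivative formulas via $\partial D^{-1} = -D^{-1}(\partial D)D^{-1}$ (where $D=YY^t$), bounds $\gamma_0,\gamma_1,\gamma_2$ by feeding the resulting trace and bilinear expressions into Lemma~\ref{trivial} with the Hilbert--Schmidt slots reserved for the unit vectors $\alpha,\alpha',\beta$, uses $\mathrm{rank}(A)\le n$, and plugs the three operator-norm estimates $\|X\|\le\sqrt{N\lambda}$, $\|Y\|\le\sqrt{M\lambda}$, $\|D^{-1}\|\le\lambda/M$ throughout. Your diagnosis that the outer $XX^tD^{-1}\cdots D^{-1}$ structure of the $Y$-derivatives is what upgrades the Wishart $1/N$ to $N/M^2$ is also correct.

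The gap is the final ``absorption'' step. You correctly note $\|A\|\le(N/M)\lambda^2\le\lambda^2$, and so $f_i(\|A\|)\le f_i(\lambda^2)$ by monotonicity of the power series, but then claim to ``absorb the excess $\lambda$'s into the polynomial factor accompanying $f_i(\lambda)$.'' This is false for a general entire $f$: with $f(z)=e^z$ one has $f_1(z)=e^z$, and $e^{\lambda^2}$ is not dominated by $e^{\lambda}$ times any polynomial in $\lambda$. So the honest version of your argument produces a bound involving $f_1(\lambda^2)$ and $f_2(\lambda^2)$, not the $f_1(\lambda)$ and $f_2(\lambda)$ appearing in the stated $a$ and $b$. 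It is worth noting that the paper's own proof of Proposition~\ref{double} simply substitutes $f_i(\lambda)$ for $f_i(\|A\|)$ when assembling $\eta_0,\eta_1,\eta_2$, which implicitly requires $\|A\|\le\lambda$; as you yourself observed, only $\|A\|\le\lambda^2$ is available (e.g.\ $\lambda_x=2$, $\delta_y^{-1}=2$, $N=M$ gives $\|A\|$ up to $4$ but $\lambda=2$). So the stated $a$, $b$ should arguably carry $f_i(\lambda^2)$ as well. But this observation does not rescue your absorption claim, which is a separate inference and is incorrect; the step as you describe it cannot be carried out.
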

\noindent {\it Remarks.}
(i) Assume that $n$, $N$, and $M$ grow to infinity at the same rate (we refer to this as the `large dimensional limit'). From the results about the extreme eigenvalues of Wishart matrices (\cite{bai99}, Section 2.2.2), it is clear that $\lambda = O(1)$, and hence $a$, $b$ are stochastically bounded. 

(ii) There are no rigorous results about the behavior of $\sigma^2$ in the large dimensional limit, other than in the gaussian case, which has been settled in \cite{jiang06}, where it is shown that $\sigma^2$ converges to a finite limit. 

(iii) When the entries of $X$ and $Y$ are jointly gaussian and some conditions on the dimensions are satisfied, the exact joint distribution of the eigenvalues of $A$ is known (see \cite{johnstone06}, Section 2.2 for references and an interesting story). While it may be possible to derive a CLT for the gaussian case using the explicit form of this density, it is hard to see how the non-gaussian case can be handled by either the method of moments or Stieltjes transforms.

(iv) In principle, it seems something could be said using the second order freeness results of Mingo and Speicher \cite{mingospeicher06}. However, to the best of our knowledge, an explicit CLT for double Wishart matrices has not been worked out using second order freeness. 
\begin{proof}[Proof of Proposition \ref{double}]
For convenience, let $C = XX^t$ and $D = YY^t$. Note that $\|C\| = \|X\|^2 = N \lambda_x$, $\|D\|= \|Y\|^2 = M\lambda_y$ and $\|D^{-1}\| = 1/(M\delta_y)$. Let the other notation be as in the proof of Proposition \ref{covprop}. Now
\[
\fpar{A}{x_{pq}} = ( e_{np}e_{Nq}^t X^t + X e_{Nq}e_{np}^t)D^{-1}.
\]
Again, using the formula 
\[
\fpar{D^{-1}}{y_{pq}} = -D^{-1}\fpar{D}{y_{pq}} D^{-1},
\]
we have
\[
\fpar{A}{y_{pq}} = - CD^{-1}( e_{np}e_{Mq}^t Y^t + Y e_{Mq}e_{np}^t)D^{-1}.
\]
Now take any $n\times n$ matrix $B$ with $\|B\| = 1$. Then for any $p,q$,
\begin{align*}
\biggl|\tr\biggl(B\fpar{A}{x_{pq}}\biggr)\biggr| &=  |\tr(B e_{np}e_{Nq}^t X^tD^{-1} + B Xe_{Nq} e_{np}^tD^{-1})|\\
&= | e_{Nq}^t X^tD^{-1}Be_{np} + e_{np}^t D^{-1}BX e_{Nq}| \\
&\le 2\|D^{-1}\|\|X\| \\
&\le \frac{2\lambda^{3/2}\sqrt{N}}{M}.
\end{align*}
Similarly,
\begin{align*}
\biggl|\tr\biggl(B\fpar{A}{y_{pq}}\biggr)\biggr| &\le 2 \|C\|\|Y\|\|D^{-1}\|^2 \le \frac{2\lambda^{7/2}N}{M^{3/2}}.
\end{align*}
Since $\lambda \ge 1$ and $N \le M$, 
\[
\gamma_0 \le \frac{2\lambda^{7/2}\sqrt{N}}{M}.
\]
Next, let $\alpha_{n\times N}$, $\tilde{\alpha}_{n\times M}$, $\alpha'_{n\times N}$, $\tilde{\alpha}'_{n\times M}$, and $\beta_{n\times n}$ be arbitrary arrays of complex numbers such that $\|\alpha\|_{HS}^2 + \|\tilde{\alpha}\|_{HS}^2 =1$, $\|\alpha'\|_{HS}^2 + \|\tilde{\alpha}'\|_{HS}^2 =1$, and $\|\beta\|_{HS} = 1$.
Then
\begin{align*}
&\biggl|\sum_{p=1}^n \sum_{q=1}^N \sum_{i,j=1}^n \alpha_{pq}\beta_{ij} \fpar{a_{ij}}{x_{pq}}\biggr|\\
&= \biggl|\sum_{p=1}^n \sum_{q=1}^N \sum_{i,j=1}^n \alpha_{pq}\beta_{ij} e_{ni}^t(e_{np}e_{Nq}^t X^t + X e_{Nq} e_{np}^t)D^{-1}e_{nj}\biggr|\\
&= \biggl|\sum_{p=1}^n \sum_{q=1}^N \sum_{j=1}^n\alpha_{pq}\beta_{pj}(X^tD^{-1})_{qj} + \sum_{p=1}^n \sum_{q=1}^N \sum_{i,j=1}^n\alpha_{pq}\beta_{ij}x_{iq}(D^{-1})_{pj}\biggr|\\
&= \bigl|\tr(\alpha X^tD^{-1} \beta^t) + \tr(D^{-1}\beta^t X \alpha^t)\bigr|\\
&\le 2\|\alpha\|_{HS} \|\beta\|_{HS}\|X\|\|D^{-1}\| \le \frac{2\|\alpha\|_{HS} \lambda^{3/2}\sqrt{N}}{M}.
\end{align*}
Similarly,
\begin{align*}
&\biggl|\sum_{p=1}^n \sum_{q=1}^M \sum_{i,j=1}^n \tilde{\alpha}_{pq}\beta_{ij} \fpar{a_{ij}}{y_{pq}}\biggr|\\
&= \biggl|\sum_{p=1}^n \sum_{q=1}^M \sum_{i,j=1}^n \tilde{\alpha}_{pq}\beta_{ij} e_{ni}^tCD^{-1}(e_{np}e_{Mq}^t Y^t + Y e_{Mq} e_{np}^t)D^{-1}e_{nj}\biggr|\\
&= \biggl|\sum_{p=1}^n \sum_{q=1}^M \sum_{i,j=1}^n\biggl(\tilde{\alpha}_{pq}\beta_{ij}(CD^{-1})_{ip}(Y^t D^{-1})_{qj} + \tilde{\alpha}_{pq}\beta_{ij}(CD^{-1}Y)_{iq}(D^{-1})_{pj}\biggr)\biggr|\\
&= \bigl|\tr(\tilde{\alpha} Y^tD^{-1}\beta^t CD^{-1}) + \tr(CD^{-1}Y\tilde{\alpha}^t D^{-1} \beta^t)\bigr|\\
&\le 2\|\tilde{\alpha}\|_{HS} \|\beta\|_{HS}\|Y\|\|C\|\|D^{-1}\|^2 \le \frac{2\|\tilde{\alpha}\|_{HS} \lambda^{7/2}N}{M^{3/2}}.
\end{align*}
Combining, and using the inequality 
\[
\|\alpha\|_{HS} + \|\tilde{\alpha}\|_{HS} \le \sqrt{2(\|\alpha\|_{HS}^2 + \|\tilde{\alpha}\|_{HS}^2)} = \sqrt{2},
\]
we get 
\[
\gamma_1 \le \frac{2\sqrt{2}\lambda^{7/2}\sqrt{N}}{M}.
\]
Next, let us compute the second derivatives. First, note that
\begin{align*}
\mpar{A}{x_{pq}}{x_{rs}} &= (e_{np}e_{Nq}^t e_{Ns}e_{nr}^t + e_{nr} e_{Ns}^t e_{Nq} e_{np}^t) D^{-1}\\
&= 
\begin{cases}
(e_{np}e_{nr}^t + e_{nr} e_{np}^t)D^{-1} &\text{ if } q = s,\\
0 &\text{ otherwise.}
\end{cases}
\end{align*}
Using Lemma \ref{trivial} in the last step below, we get
\begin{align*}
&\biggl|\sum_{p,r=1}^n\sum_{q,s=1}^N \sum_{i,j=1}^n \alpha_{pq}\alpha_{rs}'\beta_{ij}\mpar{a_{ij}}{x_{pq}}{x_{rs}}\biggr| \\
&= \biggl|\sum_{p,r=1}^n\sum_{q=1}^N \sum_{i,j=1}^n \alpha_{pq}\alpha_{rq}'\beta_{ij}e_{ni}^t(e_{np}e_{nr}^t + e_{nr}e_{np}^t)D^{-1}e_{nj}\biggr|\\
&= \biggl|\sum_{p,r=1}^n\sum_{q=1}^N\sum_{j=1}^n  \alpha_{pq}\alpha_{rq}'\beta_{pj}(D^{-1})_{rj}+ \sum_{p,r=1}^n \sum_{q=1}^N \sum_{j=1}^n \alpha_{pq}\alpha_{rq}' \beta_{rj}(D^{-1})_{pj}) \biggr|\\
&= \bigl|\tr(\alpha{\alpha'}^t D^{-1}\beta^t) + \tr(\alpha{\alpha'}^t \beta(D^{-1})^t)\bigr| \\
&\le 2\|\alpha\|_{HS}\|\alpha'\|_{HS} \|\beta\|_{HS}\|D^{-1}\|.
\end{align*}
Thus, we have
\begin{equation}\label{dxx}
\biggl|\sum_{p,r=1}^n\sum_{q,s=1}^N \sum_{i,j=1}^n \alpha_{pq}\alpha_{rs}\beta_{ij}\mpar{a_{ij}}{x_{pq}}{x_{rs}}\biggr| \le \frac{2\|\alpha\|_{HS}\|\alpha'\|_{HS}\lambda}{M}.
\end{equation}
Next, note that
\begin{align*}
\mpar{A}{x_{pq}}{y_{rs}} &= -( e_{np}e_{Nq}^t X^t + X e_{Nq}e_{np}^t)D^{-1}( e_{nr}e_{Ms}^t Y^t + Y e_{Ms}e_{nr}^t)D^{-1}.
\end{align*}
When we open up the brackets in the above expression, we get four terms. Let us deal with the first term:
\begin{align*}
&\biggl|\sum_{p,r=1}^n\sum_{q=1}^N\sum_{s=1}^M \sum_{i,j=1}^n \alpha_{pq}\tilde{\alpha}_{rs}'\beta_{ij}e_{ni}^t (e_{np}e_{Nq}^t X^tD^{-1}  e_{nr}e_{Ms}^t Y^t D^{-1} )e_{nj}\biggr|\\
&= \biggl|\sum_{p,r=1}^n\sum_{q=1}^N \sum_{s=1}^M\sum_{j=1}^n \alpha_{pq}\tilde{\alpha}_{rs}'\beta_{pj}( X^tD^{-1})_{qr} (Y^t D^{-1})_{sj}\biggr|\\
&= \bigl|\tr(\alpha X^t D^{-1} \tilde{\alpha}' Y^t D^{-1} \beta)\bigr|\le \|\alpha\|_{HS}\|\tilde{\alpha}'\|_{HS} \frac{\lambda^3\sqrt{N}}{M^{3/2}}.
\end{align*}
It can be similarly verified that the same bound holds for the other three terms as well. Combining, we get
\begin{equation}\label{dxy}
\biggl|\sum_{p,r=1}^n\sum_{q=1}^N \sum_{s=1}^M \sum_{i,j=1}^n \alpha_{pq}\tilde{\alpha}_{rs}'\beta_{ij}\mpar{a_{ij}}{x_{pq}}{y_{rs}}\biggr| \le 4\|\alpha\|_{HS} \|\tilde{\alpha}'\|_{HS} \frac{\lambda^3\sqrt{N}}{M^{3/2}}.
\end{equation}
Finally, note that
\begin{align*}
\mpar{A}{y_{pq}}{y_{rs}} &= C D^{-1}( e_{np}e_{Nq}^t Y^t + Y e_{Nq}e_{np}^t)D^{-1} ( e_{nr}e_{Ns}^t Y^t + Y e_{Ns}e_{nr}^t)D^{-1}\\
&\quad + C D^{-1}( e_{nr}e_{Ns}^t Y^t + Y e_{Ns}e_{nr}^t) D^{-1}( e_{np}e_{Nq}^t Y^t + Y e_{Nq}e_{np}^t)D^{-1}\\
&\quad - CD^{-1}(e_{np}e_{nr}^t + e_{nr} e_{np}^t)D^{-1}\ii_{\{q=s\}}.
\end{align*}
Proceeding exactly as before, it is quite easy to get the following bound.
It seems reasonable to omit the details.
\begin{equation}\label{dyy}
\biggl|\sum_{p,r=1}^n\sum_{q,s=1}^M \sum_{i,j=1}^n \tilde{\alpha}_{pq}\tilde{\alpha}_{rs}'\beta_{ij}\mpar{a_{ij}}{y_{pq}}{y_{rs}}\biggr| \le 10\|\tilde{\alpha}\|_{HS}\|\tilde{\alpha}'\|_{HS} \frac{\lambda^5N}{M^2}.
\end{equation}
Combining \eqref{dxx}, \eqref{dxy}, and \eqref{dyy},  and noting that $N \le M$, $\lambda \ge 1$, and the HS-norms of $\alpha$, $\alpha'$, $\tilde{\alpha}$, and $\tilde{\alpha}'$ are all bounded by $1$,  it is now easy to get that
\[
\gamma_2 \le\frac{16\lambda^5}{M}.
\]
Finally, note that $\mathrm{rank}(A) \le n$. Combining everything we get
\begin{align*}
\eta_0 \le \frac{2f_1(\lambda)\lambda^{7/2}\sqrt{N}}{M}, \ \ \eta_1 \le \frac{2\sqrt{2}f_1(\lambda)\lambda^{7/2} \sqrt{Nn}}{M}, \\
\ \text{ and } \ \eta_2 \le \frac{12f_1(\lambda)\lambda^5\sqrt{n}}{M} + \frac{8f_2(\lambda)\lambda^7N}{M^2}.
\end{align*}
From this, we get
\begin{align*}
\kappa_0 &\le \frac{4N\sqrt{2n}}{M^2}(\ee (f_1(\lambda)^4\lambda^{14}))^{1/2},\\
\kappa_1 &\le \frac{2\sqrt{2Nn}}{M}(\ee(f_1(\lambda)^4 \lambda^{14}))^{1/4},\ \ \text{and}\\
\kappa_2 &\le \frac{4\sqrt{n}}{M}(\ee(4f_1(\lambda)\lambda^5 + 2n^{-1/2}f_2(\lambda) \lambda^7)^4)^{1/4}.
\end{align*}
An application of Theorem \ref{matrix} completes the proof.
\end{proof}

\section{Proofs}\label{proofs}
\subsection{Proof of Theorem \ref{poincare1}}
The following basic lemma due to Charles Stein is our connection with Stein's method. For the reader's convenience, we reproduce the proof.
\begin{lmm}\label{steins}
\textup{(Stein \cite{stein86}, page 25)}
Let $Z$ be a standard gaussian random variable. Then for any random variable $W$, we have
\[
d_{TV}(W,Z) \le \sup\{|\ee(\psi(W)W-\psi'(W))|: \|\psi'\|_\infty \le 2\}.
\]
\end{lmm}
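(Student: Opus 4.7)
The plan is to carry out Stein's characterization of the gaussian distribution through the associated ODE. For each Borel set $B \subseteq \rr$, the goal is to construct a differentiable function $\psi_B$ solving the \emph{Stein equation}
\[
\psi_B'(w) - w\psi_B(w) = \ii_B(w) - \pp(Z \in B), \qquad w \in \rr,
\]
with the uniform bound $\|\psi_B'\|_\infty \le 2$. Given such a $\psi_B$, substituting $W$ and taking expectations yields
\[
\pp(W \in B) - \pp(Z \in B) = \ee[\psi_B'(W) - W\psi_B(W)].
\]
Taking absolute values, then the supremum over Borel sets $B$ on the left and over the class $\{\psi : \|\psi'\|_\infty \le 2\}$ on the right, immediately gives the claimed inequality (since the $\psi_B$ we construct sit inside the admissible class on the right).

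Producing $\psi_B$ is a routine first-order ODE calculation. Multiplying the Stein equation by the integrating factor $e^{-w^2/2}$ rewrites it as $\frac{d}{dw}\bigl[e^{-w^2/2}\psi_B(w)\bigr] = e^{-w^2/2}(\ii_B(w) - \pp(Z\in B))$. Since the right-hand side integrates to zero over $\rr$ by the definition of $\pp(Z\in B)$, integrating from $-\infty$ and from $+\infty$ produces the same bounded solution, which admits the two equivalent representations
\[
\psi_B(x) = e^{x^2/2}\int_{-\infty}^x (\ii_B(w) - \pp(Z\in B))\,e^{-w^2/2}\,dw = -e^{x^2/2}\int_x^\infty (\ii_B(w) - \pp(Z\in B))\,e^{-w^2/2}\,dw.
\]

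The main (and essentially only non-trivial) step is the uniform bound $\|\psi_B'\|_\infty \le 2$. Using the Stein equation itself, $\psi_B'(x) = x\psi_B(x) + \ii_B(x) - \pp(Z\in B)$, and since $|\ii_B(x) - \pp(Z\in B)| \le 1$ trivially, it suffices to show $|x\psi_B(x)| \le 1$ for every $x$ and every $B$. The standard trick is to split into $x \ge 0$ and $x \le 0$, use whichever of the two representations above integrates over the shorter half-line, bound $|\ii_B - \pp(Z\in B)|$ by $1$ inside the integral, and invoke the classical Mills-ratio inequality $xe^{x^2/2}\int_x^\infty e^{-w^2/2}\,dw \le 1$ for $x \ge 0$ together with its reflection for $x \le 0$. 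This Mills-type estimate is where all the real work of the lemma lives; the remaining manipulations are bookkeeping.
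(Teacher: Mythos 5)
Your proposal is correct and follows essentially the same route as the paper's own proof: construct the bounded solution to Stein's equation via the integrating factor $e^{-w^2/2}$, and control $\|\psi_B'\|_\infty$ through the Mills-ratio bound $xe^{x^2/2}\int_x^\infty e^{-w^2/2}\,dw \le 1$. The only cosmetic difference is that you parametrize the total variation distance by indicator functions $\ii_B$ (giving $\|\psi_B'\|_\infty \le 2$ directly), whereas the paper parametrizes it by $u:\rr\to[-1,1]$ with the extra factor $\tfrac{1}{2}$, obtaining $\|\varphi'\|_\infty\le 4$ and then halving; the two normalizations are equivalent.
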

\begin{proof}
Take any $u:\rr \ra [-1,1]$.
It can be verified that the function
\begin{align*}
\varphi(x) &= e^{x^2/2}\int_{-\infty}^x e^{-t^2/2} (u(t)-\ee u(Z)) dt \\
&= - e^{x^2/2}\int_x^\infty e^{-t^2/2} (u(t)-\ee u(Z)) dt
\end{align*}
is a solution to the equation
\[
\vp(x)-x\varphi(x) = u(x) - \ee u(Z).
\]
Thus for each $x$,
\begin{align*}
\vp(x) &=  u(x) - \ee u(Z) - xe^{x^2/2}\int_x^\infty e^{-t^2/2} (u(t) - \ee u(Z)) dt.
\end{align*}
It follows that
\begin{align*}
\sup_{x\ge 0}|\vp(x)| &\le (\sup |u(x)-\ee u(Z)|) \biggl(1 + \sup_{x\ge 0} xe^{x^2/2}\int_x^\infty e^{-t^2/2} dt\biggr)\\
&\le 2\sup |u(x)-\ee u(Z)| \le 4.
\end{align*}
It can be verified that the same bound holds for $\sup_{x\le 0} |\vp(x)|$ by replacing $x$ with $-x$. Therefore, we have
\begin{align*}
|\ee u(W) - \ee u(Z)| &= |\ee(\varphi(W)W - \vp(W))|\\
&\le \sup\{|\ee(\psi(W)W-\psi'(W))|: \|\psi'\|_\infty \le 4\}.
\end{align*}
Since 
\[
d_{TV}(W,Z) = \frac{1}{2}\sup\{|\ee u(W) - \ee u(Z)| : \|u\|_\infty \le 1\},
\]
this completes the proof.
\end{proof}
\noindent The next lemma is for technical convenience.
\begin{lmm}\label{techlmm}
It suffices to prove Theorem \ref{poincare1} under the assumption that $g$, $\nabla g$ and $\nabla^2 g$ are uniformly bounded.
\end{lmm}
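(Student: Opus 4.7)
The plan is a standard smooth-truncation argument. First, I would discard trivial cases: if $\sigma^2 = 0$, then $W$ is a.s.\ constant and $d_{TV}(W,Z) = 0$ (with $Z$ degenerate to the same constant), so the asserted inequality is vacuous; if any of $\kappa_0, \kappa_1, \kappa_2$ is infinite, the right-hand side is $+\infty$ and there is nothing to prove. So I assume $\sigma > 0$ and all three $\kappa_i$ finite, alongside the given $\ee W^4 < \infty$.

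Next I would fix a smooth radial cutoff: take $\phi \in C^\infty(\rr)$ with $\phi \equiv 1$ on $[-1,1]$, $\phi \equiv 0$ outside $[-2,2]$, and $0\le \phi \le 1$, and set $\chi_k(x) := \phi(\|x\|/k)$. A short calculation gives $\|\nabla \chi_k\|_\infty \le C/k$ and $\|\nabla^2 \chi_k\|_\infty \le C/k^2$, where $C$ depends only on $\phi$. Define
\[
g_k := g\,\chi_k.
\]
Since $g_k \in C^2$ with compact support, each of $g_k$, $\nabla g_k$, $\nabla^2 g_k$ is bounded, so the restricted form of Theorem \ref{poincare1} applies to $g_k$ and yields the asserted bound with $g$ replaced by $g_k$.

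I would then verify that both sides of the inequality converge correctly as $k \ra \infty$. Let $W_k := g_k(X)$ and let $Z_k$ be Gaussian with the same mean and variance as $W_k$. Since $|W_k| \le |W|$ and $W_k \ra W$ a.s., dominated convergence (using $\ee W^4 < \infty$) gives $\ee W_k \ra \ee W$ and $\sigma_k^2 \ra \sigma^2$. Expanding $\nabla g_k = \chi_k \nabla g + g \nabla \chi_k$, the decay $|\nabla \chi_k| \le C/k$ together with the finiteness of $\kappa_1$ and $\ee W^4$ lets dominated convergence yield $\kappa_1^{(k)} \ra \kappa_1$; the analogous Leibniz expansion for $\nabla^2 g_k$, using $\|\nabla^2 \chi_k\|_\infty \le C/k^2$ and the finiteness of $\kappa_2$, $\kappa_1$, $\ee W^4$, gives $\kappa_2^{(k)} \ra \kappa_2$, and the same pattern yields $\kappa_0^{(k)} \ra \kappa_0$. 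For the left-hand side I use the triangle inequality
\[
d_{TV}(W,Z) \le d_{TV}(W,W_k) + d_{TV}(W_k,Z_k) + d_{TV}(Z_k,Z).
\]
Since $g_k \equiv g$ on $\{\|x\|\le k\}$, we have $d_{TV}(W,W_k) \le \pp(\|X\| > k) \ra 0$; since $\sigma > 0$ and the Gaussian parameters converge, $d_{TV}(Z_k,Z) \ra 0$ by a direct estimate on Gaussian densities; the middle term is controlled by the already-established bound for $g_k$, whose right-hand side converges to the right-hand side of the target bound for $g$. The gaussian-$X$ case in Theorem \ref{poincare1} is handled identically, with $\|\Sigma\|^{3/2}$ simply carried along.

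The only genuine technical step is the $L^4$ dominated convergence for $\kappa_0^{(k)}, \kappa_1^{(k)}, \kappa_2^{(k)}$: after expanding by Leibniz, the error terms involve $g\,\nabla \chi_k$ and $g\,\nabla^2 \chi_k$, and one must check that the $1/k$ and $1/k^2$ decay of the cutoff derivatives, paired with $\ee W^4 < \infty$ and finiteness of the $\kappa_i$'s, kills these contributions in the limit. I expect this bookkeeping to be the main obstacle, but it is routine once the cutoff $\chi_k$ above is fixed.
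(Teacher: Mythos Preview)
Your proposal is correct and follows essentially the same approach as the paper: smooth radial truncation $g_\alpha(x)=g(x)h(\alpha^{-1}\|x\|)$, dominated convergence for the mean, variance, and the three $\kappa_i$'s (using $\ee W^4<\infty$ and finiteness of the $\kappa_i$), and $d_{TV}(W,W_\alpha)\le \pp(\|X\|>\alpha)\to 0$. The only difference is cosmetic: you spell out the triangle inequality $d_{TV}(W,Z)\le d_{TV}(W,W_k)+d_{TV}(W_k,Z_k)+d_{TV}(Z_k,Z)$ and the Gaussian-to-Gaussian step explicitly, whereas the paper leaves this implicit.
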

\begin{proof}
Suppose we have proved Theorem \ref{poincare1} under the said assumption. Take any $g\in C^2(\rr^n)$ such that $\sigma^2$ is finite. Now, if any one among $\kappa_0$, $\kappa_1$, and $\kappa_2$ is infinite, there is nothing to prove. So let us assume that they are all finite. 


Let $h:\rr^+\ra [0,1]$ be a $C^\infty$ function such that $h(t) = 1$ when $t\le 1$ and $h(t) = 0$ when $t\ge 2$. For each $\alpha > 0$ let
\[
g_\alpha (x) = g(x)h(\alpha^{-1}\| x\|).
\]
Clearly, as $\alpha \ra \infty$,
\begin{equation}\label{gag}
\begin{split}
d_{TV}(g(X), g_\alpha(X)) &\le \pp(g(X)\ne g_\alpha (X)) \le \pp(\|X\| > \alpha) \ra 0.
\end{split}
\end{equation}
Note that for any finite $\alpha$, $g_\alpha$ and its derivatives are uniformly bounded over~$\rr^n$. Now, since $\ee g(X)^2$ is finite, $|g_\alpha(x)|\le |g(x)|$ for all $x$, and $g_\alpha$ converges to $g$ pointwise, the dominated convergence theorem gives
\[
\lim_{\alpha\ra \infty} \ee g_\alpha(X) = \ee g(X), \ \text{ and } \ \lim_{\alpha\ra \infty} \ee g_\alpha(X)^2 = \ee g(X)^2.
\]
Again, since $\ee g(X)^4$ and $\kappa_0$, $\kappa_1$ and $\kappa_2$ are all finite, the same logic shows that
\[
\lim_{\alpha \ra \infty} \kappa_i(g_\alpha) = \kappa_i(g) \ \text{ for } \ i=0,1,2.
\]
These three steps combined show that if Theorem \ref{poincare1} holds for each $g_\alpha$, it must hold for $g$ as well. This completes the proof.
\end{proof}
\noindent The following result is the main ingredient in the proof of Theorem \ref{poincare1}.
\begin{lmm}\label{gaussian}
Let $Y = (Y_1,\ldots,Y_n)$ be a vector of i.i.d.\ standard gaussian random variables. Let $f:\rr^n \ra \rr$ be an absolutely continuous function such that $W= f(Y)$ has zero mean and unit variance. Assume that $f$ and its derivatives have subexponential growth at infinity. Let $Y'$ be an independent copy of $Y$ and define the function $T :\rr^n \ra \rr$ as
\[
T(y) := \int_0^1 \frac{1}{2\st} \ee\biggl(\sum_{i=1}^n \fpar{f}{y_i}(y) \fpar{f}{y_i}(\st y + \sst Y') \biggr) dt 
\]
Let $h(w) = \ee(T(Y)| W=w)$. Then $\ee h(W) = 1$. If $Z$ is standard gaussian, then
\[
d_{TV}(W,Z) \le 2\ee|h(W) - 1|\le 2[\var(T(Y))]^{1/2},
\]
where $d_{TV}$ is the total variation distance.
\end{lmm}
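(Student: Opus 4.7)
The plan is to first establish the identity $\ee(\varphi(W) W) = \ee(\varphi'(W)\, T(Y))$ for a sufficiently rich class of test functions $\varphi$, and then extract both the normalization $\ee h(W) = 1$ and the total variation bound from it via Stein's lemma. Following the sketch already given in the introduction, I would set up the gaussian interpolation $W_t = f(\st Y + \sst Y')$, so that $W_1 = W$ while $W_0 = f(Y')$ is independent of $Y$ with mean zero. Because $\ee W_0 = 0$, the product $\varphi(W) W$ can be rewritten as a telescoping $t$-integral,
\[
\ee(\varphi(W) W) = \int_0^1 \ee\biggl(\varphi(W) \sum_{i=1}^n \Bigl(\frac{Y_i}{2\st} - \frac{Y_i'}{2\sst}\Bigr) \fpar{f}{y_i}(\st Y + \sst Y')\biggr)\,dt,
\]
after differentiating $W_t$ in $t$ and passing the derivative through the expectation.

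The crux is to apply gaussian integration by parts coordinatewise, once in $Y_i$ and once in $Y_i'$, inside the integrand. In the $Y_i$ piece, the derivative can land either on $\varphi(f(Y))$, producing a term of the form $\varphi'(W)\fpar{f}{y_i}(Y)\fpar{f}{y_i}(\st Y + \sst Y')$, or on the gradient factor evaluated at $\st Y + \sst Y'$, producing a second-order term of the form $\st\,\varphi(W)\spar{f}{y_i}(\st Y + \sst Y')$. In the $Y_i'$ piece, only the second-order analogue appears, with a factor of $\sst$ (since $\varphi(W)$ does not depend on $Y'$). After dividing by $2\st$ and $2\sst$ respectively, the two second-order contributions cancel exactly; summing in $i$ and integrating in $t$ leaves precisely $\ee(\varphi'(W)T(Y))$, and the tower property converts $T(Y)$ into $h(W)$ inside the expectation. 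The only real obstacle is the analytic justification of the IBP and of differentiation under the integral, which is where the subexponential-growth hypothesis on $f$ and its derivatives enters; I would handle it by a routine truncation/dominated-convergence argument, or simply reduce first to the case of bounded $f$ and bounded derivatives via a cutoff as in Lemma~\ref{techlmm}.

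With the identity in hand, everything else is short. Choosing $\varphi(x) = x$ and using $\ee(W^2) = 1$ gives $\ee h(W) = 1$. For the total variation bound, Lemma~\ref{steins} reduces matters to bounding $|\ee(\psi(W)W - \psi'(W))|$ over $\psi$ with $\|\psi'\|_\infty \le 2$; by the identity, this quantity equals $|\ee(\psi'(W)(h(W) - 1))|$, which is at most $2\,\ee|h(W)-1|$. Finally, Cauchy--Schwarz together with $\ee h(W) = 1$ gives $\ee|h(W)-1| \le (\var(h(W)))^{1/2}$, and conditional Jensen yields $\var(h(W)) = \var(\ee(T(Y)\mid W)) \le \var(T(Y))$, completing the chain of inequalities stated in the lemma.
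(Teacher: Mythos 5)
Your argument is correct, but the integration-by-parts step takes a genuinely different route from the paper's. You integrate by parts coordinatewise in $Y_i$ and in $Y_i'$ separately, produce two second-order terms proportional to $\frac{\partial^2 f}{\partial y_i^2}(\st Y + \sst Y')$, and observe that after dividing by $2\st$ and $2\sst$ they cancel. This is a correct and pleasantly elementary computation. The paper instead performs the orthogonal change of variables $(Y,Y')\mapsto(U_t,V_t)$ with $U_t=\st Y+\sst Y'$ and $V_t=\sst Y-\st Y'$, rewrites $\frac{Y_i}{2\st}-\frac{Y_i'}{2\sst}$ as $\frac{V_{t,i}}{2\sqrt{t(1-t)}}$, and integrates by parts in $V_{t,i}$ alone. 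Because $\frac{\partial f}{\partial y_i}(U_t)$ is then constant in $V_t$, the derivative only hits $\psi\circ f$, and no second-order derivatives of $f$ ever appear. The upshot: your approach is a bit more direct and avoids the rotation trick, but it implicitly requires $f\in C^2$, whereas the lemma is stated (and the paper's proof works) for merely absolutely continuous $f$. In the paper's actual applications $f$ is $C^2$, so the gap is harmless there, but it does mean your proof establishes a slightly weaker version of the lemma as stated. The remainder of your argument — $\ee h(W)=1$ via $\varphi(x)=x$, Stein's lemma to get $d_{TV}(W,Z)\le 2\ee|h(W)-1|$, then Cauchy--Schwarz and conditional Jensen to reach $2[\var(T(Y))]^{1/2}$ — matches the paper exactly.
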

\begin{proof} 
Take any $\psi:\rr \ra \rr$ so that $\psi^\prime$ exists and is bounded. Then we have
\begin{align*}
&\ee(\psi(W) W) = \ee(\psi(f(Y))f(Y) - \psi(f(Y))f(Y')) \\
&= \ee \biggl(\int_0^1 \psi(f(Y))\frac{d}{dt} f(\st Y + \sst Y') dt\biggr)\\
&= \ee\biggl(\int_0^1 \psi(f(Y)) \sum_{i=1}^n \biggl(\frac{Y_i}{2\st} - \frac{Y'_i}{2\sst}\biggr)\fpar{f}{y_i}(\st Y + \sst Y') dt\biggr).
\end{align*}
Now fix $t\in (0,1)$, and let $U_t = \st Y + \sst Y'$, and $V_t=\sst Y - \st Y'$. Then $U_t$ and $V_t$ are independent standard gaussian random vectors and $Y= \st U_t + \sst V_t$.
Taking any $i$, and using the integration-by-parts formula for the gaussian measure (in going from the second to the third line below), we get
\begin{align*}
&\ee\biggl(\psi(f(Y)) \biggl(\frac{Y_i}{2\st} - \frac{Y'_i}{2\sst}\biggr)\fpar{f}{y_i}(\st Y+ \sst Y')\biggr) \\
&= \frac{1}{2\sqrt{t(1-t)}}\ee\biggl(\psi(f(\st U_t + \sst V_t)) V_{t,i} \fpar{f}{y_i}(U_t)\biggr)\\
&= \frac{1}{2\st} \ee\biggl(\psi^\prime(f(Y)) \fpar{f}{y_i}(Y) \fpar{f}{y_i}(U_t)\biggr).
\end{align*}
Note that we need the growth condition on the derivatives of $f$ to carry out the interchange of expectation and integration and the integration-by-parts. From the above, we have
\begin{align}
\ee(\psi(W)W) &= \ee\biggl(\psi^\prime(W) \int_0^1 \frac{1}{2\st}\sum_{i=1}^n \fpar{f}{y_i}(Y) \fpar{f}{y_i}(\st Y + \sst Y') dt\biggr) \nonumber \\
&= \ee(\psi^\prime(W) T(Y)) = \ee (\psi^\prime(W) h(W)) \nonumber.
\end{align}
The assertion that $\ee(h(W))=1$ now follows by taking $\psi(w) = w$ and using the hypothesis that $\ee(W^2)=1$. Also, easily, we have the upper bound 
\begin{align*}
|\ee(\psi(W)W - \psi^\prime(W))| &= |\ee(\psi^\prime(W) (h(W) -1 ))| \\
&\le  \|\psi^\prime\|_\infty\ee|h(W)-1|.
\end{align*}
A simple application of Lemma \ref{steins} completes the proof. 
\end{proof}
\noindent Theorem \ref{poincare1} follows from the above lemma if we bound $\var(T(Y))$ using the gaussian Poincar\'e inequality, as we do below. 

\begin{proof}[Proof of Theorem \ref{poincare1}] 
First off, by Lemma \ref{techlmm}, we can assume that $g$, $\nabla g$, and $\nabla^2 g$ are uniformly bounded and hence apply Lemma \ref{gaussian} without having to check for the growth conditions at infinity. 
Let $Y_1,\ldots,Y_n$ be independent standard gaussian random variables and $\varphi_1,\ldots,\varphi_n$ be functions such that $X_i = \varphi_i(Y_i)$ and $\|\varphi_i'\|_\infty \le c_1$, $\|\varphi_i''\|_\infty \le c_2$ for each~$i$. Define a function $\varphi: \rr^n \ra \rr^n$ as
$\varphi(y_1,\ldots,y_n) := (\varphi_1(y_1),\ldots,\varphi_n(y_n))$
and let 
\[
f(y) := g(\varphi(y)).
\]
Then $W = g(X) = f(Y)$. It is not difficult to see, through centering and scaling, that it suffices to prove Theorem \ref{poincare1} under the assumptions that $\ee(W) = 0$ and $\ee(W^2)= 1$ (this is where the $\sigma^2$ appears in the error bound). 
Now define $T$ as in Lemma \ref{gaussian}:
\[
T(y) = \int_0^1 \frac{1}{2\st} \ee\biggl(\sum_{i=1}^n \fpar{f}{y_i}(y) \fpar{f}{y_i}(\st y + \sst Y')\biggr) dt,
\]
where $Y'$ is an independent copy of $Y$. Our strategy for bounding $\var(T)$ is to simply use the gaussian Poincar\'e inequality:
\[
\var(T(Y))\le \ee\|\nabla T(Y)\|^2.
\]
The boundedness of $\nabla^2 g$ ensures that we can move the derivative inside the integrals when differentiating $T$:
\begin{align*}
\fpar{T}{y_i}(y) &= \ee\int_0^1 \frac{1}{2\st} \sum_{j=1}^n \mpar{f}{y_i}{y_j}(y)\fpar{f}{y_j}(\st y + \sst Y')dt \\
&\quad + \ee\int_0^1 \frac{1}{2} \sum_{j=1}^n \fpar{f}{y_j}(y)\mpar{f}{y_i}{y_j}(\st y + \sst Y')dt.
\end{align*}
Now for each $t\in [0,1]$, let $U_t = \st Y + \sst Y'$. With several applications of Jensen's inequality and the inequality $(a+b)^2 \le 2a^2 + 2b^2$, we get
\begin{equation}\label{poinbd}
\begin{split}
\ee\|\nabla T(Y)\|^2 &\le \ee\int_0^1 \frac{1}{\st} \sum_{i=1}^n\biggl(\sum_{j=1}^n \mpar{f}{y_i}{y_j}(Y)\fpar{f}{y_j}(U_t)\biggr)^2dt \\
&\quad + \ee\int_0^1 \frac{1}{2} \sum_{i=1}^n \biggl(\sum_{j=1}^n \fpar{f}{y_j}(Y)\mpar{f}{y_i}{y_j}(U_t)\biggr)^2dt
\end{split}
\end{equation}
Now, we have
\[
\fpar{f}{y_i}(y) = \fpar{g}{x_i}(\varphi(y)) \varphi_i'(y_i).
\]
Thus, if $i\ne j$,
\[
\mpar{f}{y_i}{y_j} = \mpar{g}{x_i}{x_j}(\varphi(y)) \varphi_i'(y_i) \varphi_j'(y_j).
\]
On the other hand, 
\[
\spar{f}{y_i} = \spar{g}{x_i} (\varphi(y)) \varphi_i'(y_i)^2 + \fpar{g}{x_i}(\varphi(y)) \varphi_i''(y_i).
\]
Thus, for any $y, u\in \rr^n$,
\begin{align*}
&\sum_{i=1}^n \biggl(\sum_{j=1}^n \mpar{f}{y_i}{y_j}(y)\fpar{f}{y_j}(u) \biggr)^2\\
&\le 2\sum_{i=1}^n \biggl(\sum_{j=1}^n \mpar{g}{x_i}{x_j}(\varphi(y))\varphi_i'(y_i)\varphi_j'(y_j)\fpar{g}{x_j}(\varphi(u))\varphi_j'(u_j)\biggr)^2 \\
&\quad + 2\sum_{i=1}^n \biggl(\fpar{g}{x_i}(\varphi(y)) \varphi_i''(y_i)\fpar{g}{x_i}(\varphi(u))\varphi_i'(u_i)\biggr)^2\\
&\le 2c_1^6 \|\nabla^2 g(\varphi(y))\|^2 \|\nabla g(\varphi(u))\|^2 + 2c_1^2c_2^2\sum_{i=1}^n  \biggl(\fpar{g}{x_i}(\varphi(y))\fpar{g}{x_i}(\varphi(u))\biggr)^2.
\end{align*}
Let us now fix $t\in [0,1]$, replace $y$ by $Y$ and $u$ by $U_t$ and use the above inequality to bound the first integrand on the right hand side of \eqref{poinbd}. First, note that since $U_t$ has the same law as $Y$, 
\begin{align*}
&\ee(\|\nabla^2 g(\varphi(Y))\|^2 \|\nabla g(\varphi(U_t))\|^2)\\
&\le (\ee\|\nabla^2 g(\varphi(Y))\|^4)^{1/2}( \ee\|\nabla g(\varphi(U_t))\|^4)^{1/2}\\
&= (\ee\|\nabla^2 g(X)\|^4)^{1/2}( \ee\|\nabla g(X)\|^4)^{1/2} = \kappa_1^2\kappa_2^2.
\end{align*}
For the same reason, we also have
\begin{align*}
\sum_{i=1}^n  \ee\biggl[\biggl(\fpar{g}{x_i}(\varphi(Y))\fpar{g}{x_i}(\varphi(U_t))\biggr)^2\biggr] &\le \kappa_0^2.
\end{align*}
Combining, we get
\[
\ee\sum_{i=1}^n\biggl(\sum_{j=1}^n \mpar{f}{y_i}{y_j}(Y)\fpar{f}{y_j}(U_t)\biggr)^2 \le 2c_1^6 \kappa_1^2 \kappa_2^2 + 2c_1^2 c_2^2 \kappa_0^2.
\]
Since this does not depend on $t$, it is now easy to see that the first term on the right hand side is bounded by $4c_1^6 \kappa_1^2 \kappa_2^2 + 4c_1^2 c_2^2 \kappa_0^2$. In a very similar manner, the second term can be bounded by $c_1^6 \kappa_1^2 \kappa_2^2 + c_1^2 c_2^2 \kappa_0^2$. Combining, and applying the inequality $\sqrt{a+b} \le \sqrt{a}+\sqrt{b}$, we finish the proof of first part of the theorem.

To prove the second part, let $X = AY$, where $Y$ is a vector of independent standard gaussian random variables and $A$ is a matrix such that $\Sigma = AA^t$. Define $h:\rr^n \ra \rr$ as $h(y) = g(Ay)$. It is easy to verify that 
\[
\|\nabla h(y)\| \le \|\Sigma\|^{1/2}\|\nabla g(Ay)\|\ \text{ and } \ \|\nabla^2 h(y)\| \le \|\Sigma\| \|\nabla^2 g(Ay)\|.
\]
The rest is straightforward from the first part of the theorem applied to $h(Y)$ instead of $g(X)$, noting that for the standard gaussian distribution we have $c_1 = 1$ and $c_2 = 0$.
\end{proof}

\subsection{Proof of Theorem \ref{matrix}}
Let us begin with some bounds on matrix differentials. Inequality \eqref{normprop} from Lemma \ref{trivial} is particularly useful.
\begin{lmm}\label{bd1}
Let $A=(a_{ij})_{1\le i,j\le n}$ be an arbitrary square matrix with complex entries. Let $f(z) = \sum_{m=0}^\infty b_m z^m$ be an entire function. Define two associated entire functions $f_1$ and $f_2$ as $f_1(z)=\sum_{m=1}^\infty m |b_m|z^{m-1}$ and $f_2(z) = \sum_{m=2}^\infty m(m-1)|b_m| z^{m-2}$. Then for each $i,j$, we have
\[
\fpar{}{a_{ij}} \tr(f(A)) = (f'(A))_{ji}.
\]
This gives the bounds
\begin{align*}
&\biggl|\fpar{}{a_{ij}} \tr(f(A))\biggr| \le f_1(\|A\|) \ \text{ for each $i,j$, and } \\
&\sum_{i,j}\biggl|\fpar{}{a_{ij}} \tr(f(A))\biggr|^2 \le \mathrm{rank}(A)f_1(\|A\|)^2.
\end{align*}
Next, for each $1\le i,j,k,l\le n$, let
\[
h_{ij,kl} = \mpar{}{a_{ij}}{a_{kl}}\tr(f(A)).
\] 
Let $H$ be the $n^2 \times n^2$ matrix $(h_{ij,kl})_{1\le i,j,k,l\le n}$. Then $\|H\| \le f_2(\|A\|)$.
\end{lmm}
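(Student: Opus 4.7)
My plan is to compute the derivatives directly from the power-series expansion $\tr(A^m) = \sum_{i_1,\ldots,i_m} a_{i_1 i_2} a_{i_2 i_3} \cdots a_{i_m i_1}$. Differentiating with respect to $a_{ij}$, only the terms in which some consecutive pair $(i_s, i_{s+1})$ equals $(i,j)$ survive, and by cyclic invariance each of the $m$ positions contributes the same expression, giving $\partial \tr(A^m)/\partial a_{ij} = m (A^{m-1})_{ji}$. Summing against $b_m$ yields the first claim $\partial \tr(f(A))/\partial a_{ij} = (f'(A))_{ji}$.

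The two derivative bounds then follow from standard matrix-norm inequalities. Every entry of a matrix is dominated by its operator norm, so $|(f'(A))_{ji}| \le \|f'(A)\| \le \sum_{m\ge 1} m|b_m|\|A\|^{m-1} = f_1(\|A\|)$ by the triangle inequality on the power series. For the Hilbert--Schmidt bound, $\sum_{i,j} |(f'(A))_{ji}|^2 = \|f'(A)\|_{HS}^2$ is the sum of squared singular values of $f'(A)$; each singular value is at most $\|f'(A)\| \le f_1(\|A\|)$, and the number of nonzero singular values is at most $\mathrm{rank}(A)$ (this is the crude bookkeeping point where one uses that, for the paper's applications, $\mathrm{rank}(A) = n$ and so the bound reduces to $\|f'(A)\|_{HS}^2 \le n\, f_1(\|A\|)^2$).

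For the Hessian, I would differentiate $(A^{m-1})_{ji}$ once more in $a_{kl}$. Writing $(A^{m-1})_{ji}$ as a sum over length-$(m-1)$ paths and counting the positions at which the entry $a_{kl}$ can sit, one obtains
\[
h_{ij,kl} = \sum_{m\ge 2} m\, b_m \sum_{p=0}^{m-2} (A^p)_{jk}(A^{m-2-p})_{li}.
\]
To bound $\|H\|$, I test against arbitrary $\alpha, \beta \in \cc^{n\times n}$ with $\|\alpha\|_{HS} = \|\beta\|_{HS} = 1$. The critical observation is that the bilinear sum $\sum_{i,j,k,l} \alpha_{ij} (A^p)_{jk} (A^{m-2-p})_{li} \beta_{kl}$ repackages cleanly as $\tr(A^{m-2-p} \alpha A^p \beta)$. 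Lemma \ref{trivial}, applied with the two Hilbert--Schmidt slots assigned to $\alpha$ and $\beta$, then gives $|\tr(A^{m-2-p}\alpha A^p \beta)| \le \|A\|^{m-2}\|\alpha\|_{HS}\|\beta\|_{HS} = \|A\|^{m-2}$. Summing over $p$ yields a factor of $m-1$, and summing over $m$ produces $\sum_{m\ge 2} m(m-1)|b_m|\|A\|^{m-2} = f_2(\|A\|)$.

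The main obstacle is carrying out the index arithmetic for the second derivative carefully enough to recognise the bilinear form as a trace of the four matrices $A^{m-2-p}, \alpha, A^p, \beta$ with $\alpha$ and $\beta$ in the correct slots. Once that identification is made, Lemma \ref{trivial} does all the real work, and the power series for $f_2$ falls out mechanically.
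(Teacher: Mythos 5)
Your proposal is correct and follows essentially the same route as the paper's proof: differentiate the power series of $\tr(A^m)$ term by term, dominate $\|f'(A)\|$ and $\|f'(A)\|_{HS}$ via the triangle inequality on the power series and the rank/singular-value bound, and for the Hessian test the bilinear form against unit Hilbert--Schmidt matrices, recognize it as a trace of four matrices, and invoke Lemma \ref{trivial}. The only cosmetic difference is that you work directly with index sums whereas the paper packages $\partial A/\partial a_{ij}$ as the matrix unit $e_ie_j^t$ and writes the resulting traces in that notation; the underlying computations are identical, and you share with the paper the same harmless imprecision at the $m=1$ term (where $\mathrm{rank}(A^0)=n$ rather than $\mathrm{rank}(A)$), which both treatments implicitly set aside since the applications have $\mathrm{rank}(A)=n$.
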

\begin{proof}
For each $i$, let $e_i$ be the $i^{\mathrm{th}}$ coordinate vector in $\rr^n$, i.e.\ the vector whose $i^{\mathrm{th}}$ component is $1$ and the rest are zero.
Take any integer $m\ge 1$. A simple computation gives
\[
\fpar{}{a_{ij}} \tr(A^m) = \sum_{r=0}^{m-1} \tr\biggl(A^r\fpar{A}{a_{ij}} A^{m-r-1}\biggr) = m \tr\biggl(\fpar{A}{a_{ij}} A^{m-1}\biggr).
\]
Thus, 
\[
\fpar{}{a_{ij}} \tr(f(A)) = \tr\biggl(\fpar{A}{a_{ij}} f'(A)\biggr) = \tr(e_i e_j^t f'(A)) = (f'(A))_{ji}.
\]
The first inequality follows from this, since $|(f'(A))_{ji}| \le \|f'(A)\|\le f_1(\|A\|)$. 
Next, recall that if $B$ is a square matrix and $r = \mathrm{rank}(B)$, then $\|B\|_{HS}\le \sqrt{r}\|B\|$. This holds because 
\[
\|B\|_{HS}^2  =  \sum_i \lambda_i^2,
\]
where $\lambda_i$ are the singular values of $B$, whereas $\|B\| = \max_i |\lambda_i|$. Thus, if we let $r= \mathrm{rank}(A)$, then
\begin{align*}
&\biggl(\sum_{i,j}\biggl|\fpar{}{a_{ij}} \tr(f(A))\biggr|^2 \biggr)^{1/2} = \|f'(A)\|_{HS} \le \sum_{m=1}^\infty m|b_m|\|A^{m-1}\|_{HS} \\
&\le \sqrt{r}\sum_{m=1}^\infty m|b_m|\|A^{m-1}\| \le  \sqrt{r}\sum_{m=1}^\infty m|b_m|\|A\|^{m-1} = \sqrt{r}f_1(\|A\|).
\end{align*}
This proves the first claim. Next, fix some $m\ge 2$. Another simple computation shows that
\begin{align*}
\mpar{}{a_{ij}}{a_{kl}}\tr(A^m) &= m\sum_{r=0}^{m-2}\tr\biggl(\fpar{A}{a_{ij}} A^r \fpar{A}{a_{kl}} A^{m-r-2}\biggr)\\
&= m\sum_{r=0}^{m-2}\tr(e_ie_j^t A^r e_ke_l^t A^{m-r-2}).
\end{align*}
Now let $B=(b_{ij})_{1\le i,j\le n}$  and $C= (c_{ij})_{1\le i,j\le n}$ be arbitrary arrays of complex numbers such that $\sum_{i,j} |b_{ij}|^2 = \sum_{i,j}|c_{ij}|^2 =1$. Using the above expression, we get
\[
\sum_{i,j,k,l} b_{ij}c_{kl}\mpar{}{a_{ij}}{a_{kl}}\tr(A^m) = m\sum_{r=0}^{m-2} \tr(B A^r C A^{m-r-2}).
\]
Now, by Lemma \ref{trivial}, it follows that
\[
|\tr(B A_1 C A_2)| \le \|B\|_{HS}\|C\|_{HS}\|A_1\|\|A_2\|\le \|A\|^{m-2}. 
\]
Thus,
\[
\biggl|\sum_{i,j,k,l} b_{ij} c_{kl} h_{ij,kl}\biggr| \le \sum_{m=2}^\infty m(m-1) |b_m|\|A\|^{m-2} = f_2(\|A\|).
\]
Since this holds for all $B, C$ such that $\sum_{i,j}|b_{ij}|^2 = \sum_{i,j}|c_{ij}|^2 = 1$, the proof is done.
\end{proof}

\begin{proof}[Proof of Theorem \ref{matrix}]
Let all notation be as in the statement of the theorem. For any $n\times n$ matrix $B$, let $\psi(B) = \tr f(B)$. Define the map $g : \rr^\mi \ra \cc$ as $g = \psi \circ A$, that is,
\[
g(x) = \tr f(A(x)).
\]
It is useful to recall the following basic fact for the subsequent computations: For any $k$ and any vector $x\in \cc^k$, 
\begin{equation}\label{basiceq}
\|x\| = \sup\bigl\{\bigl|{\textstyle\sum_1^k} x_iy_i \bigr|: y\in \cc^k, \|y\| = 1\bigr\}.
\end{equation}
Using this and the definition of $\gamma_1$ we get 
\begin{align*}
&\|\nabla g(x)\| = \sup_{\alpha\in \mathcal{R}} \biggl|\sum_{u\in \mi} \alpha_u \fpar{g}{x_u}(x)\biggr| \\
&=\sup_{\alpha\in \mathcal{R}}\biggl| \sum_{u\in \mi} \sum_{i,j=1}^n\alpha_u\fpar{\psi}{a_{ij}}(A(x)) \fpar{a_{ij}}{x_u}(x)\biggr| \le \gamma_1 (x) \biggl(\sum_{i,j=1}^n\biggl|\fpar{\psi}{a_{ij}}(A(x))\biggr|^2\biggr)^{1/2}. 
\end{align*}
Now suppose $f_1$ is defined as in Lemma \ref{bd1}. Applying the second bound from Lemma \ref{bd1} to the last term in the above expression, we get
\begin{equation}\label{eq1}
\|\nabla g(x)\| \le \gamma_1(x) f_1(\|A(x)\|) \sqrt{\mathrm{rank}(A(x))} = \eta_1(x).
\end{equation}
Again note that for any $u\in \mi$, by Lemma \ref{bd1} and the definition of $\gamma_0$, we have 
\begin{align*}
\biggl|\fpar{g}{x_u}(x)\biggr| &=  \biggl|\tr\biggl(f'(A)\fpar{A}{x_u}\biggr)\biggr|\le \gamma_0(x)f_1(\|A(x)\|) = \eta_0(x).
\end{align*}
Thus,
\begin{align}\label{eq2}
\sum_{u\in \mi}\biggl|\fpar{g}{x_u}(x)\biggr|^4 &\le \max_{u\in \mi} \biggl|\fpar{g}{x_u}(x)\biggr|^2 \sum_{u\in \mi} \biggl|\fpar{g}{x_u}(x)\biggr|^2 \le \eta_0(x)^2 \eta_1(x)^2.
\end{align}
Next, note that
\begin{align*}
\mpar{g}{x_u}{x_v} &= \sum_{i,j=1}^n \fpar{\psi}{a_{ij}}(A(x))\mpar{a_{ij}}{x_u}{x_v}(x) \\
& \quad + \sum_{i,j,k,l=1}^n \mpar{\psi}{a_{ij}}{a_{kl}}(A(x)) \fpar{a_{ij}}{x_u}(x) \fpar{a_{kl}}{x_v}(x).
\end{align*}
Thus, if $\nabla^2 g$ denotes the Hessian matrix of $g$, then
\begin{align*}
\|\nabla^2 g(x)\| &= \sup_{\alpha, \alpha'\in \mathcal{R}} \biggl|\sum_{u,v\in \mi} \alpha_u \alpha_v' \mpar{g}{x_u}{x_v}\biggr|\\
&\le \sup_{\alpha, \alpha'\in \mathcal{R}}\biggl|\sum_{u,v\in \mi}\sum_{i,j=1}^n\alpha_u \alpha_v' \fpar{\psi}{a_{ij}}(A(x))\mpar{a_{ij}}{x_u}{x_v}(x)\biggr| \\
& \quad + \sup_{\alpha, \alpha'\in \mathcal{R}}\biggl|\sum_{u,v\in \mi}\sum_{i,j,k,l=1}^n \alpha_u\alpha_v'\mpar{\psi}{a_{ij}}{a_{kl}}(A(x)) \fpar{a_{ij}}{x_u}(x) \fpar{a_{kl}}{x_v}(x)\biggr|.
\end{align*}
Now, by the definition of $\gamma_2(x)$ and Lemma \ref{bd1}, we have
\begin{align*}
&\sup_{\alpha,\alpha'\in \mathcal{R}}\biggl|\sum_{u,v\in \mi}\sum_{i,j=1}^n\alpha_u \alpha_v' \fpar{\psi}{a_{ij}}(A(x))\mpar{a_{ij}}{x_u}{x_v}(x)\biggr|\\
 &\le \gamma_2(x) \biggl(\sum_{i,j=1}^n\biggl|\fpar{\psi}{a_{ij}}(A(x))\biggr|^2\biggr)^{1/2}\le \gamma_2(x)f_1(\|A(x)\|) \sqrt{\mathrm{rank}(A(x))}. 
\end{align*}
For the second term, note that by the definition of the operator norm and the identity \eqref{basiceq},
\begin{align*}
&\sup_{\alpha,\alpha'\in \mathcal{R}}\biggl|\sum_{u,v\in \mi}\sum_{i,j,k,l=1}^n \alpha_u\alpha_v'\mpar{\psi}{a_{ij}}{a_{kl}}(A(x)) \fpar{a_{ij}}{x_u}(x) \fpar{a_{kl}}{x_v}(x)\biggr|\\
&\le \|\nabla^2 \psi(A(x))\| \sup_{\alpha\in \mathcal{R}} \sum_{i,j=1}^n \biggl|\sum_{u\in \mi} \alpha_u\fpar{a_{ij}}{x_u}(x)\biggr|^2 \\
&= \|\nabla^2 \psi(A(x))\| \sup_{\alpha\in \mathcal{R},\beta \in \mathcal{S}} \biggl|\sum_{i,j=1}^n\sum_{u\in \mi} \alpha_u\beta_{ij}\fpar{a_{ij}}{x_u}(x)\biggr|^2.
\end{align*}
Using the third bound from Lemma \ref{bd1} and the definition of $\gamma_1(x)$, we now get
\begin{align*}
&\sup_{\alpha,\alpha'\in \mathcal{R}}\biggl|\sum_{u,v\in \mi}\sum_{i,j,k,l=1}^n \alpha_u\alpha_v'\mpar{\psi}{a_{ij}}{a_{kl}}(A(x)) \fpar{a_{ij}}{x_u}(x) \fpar{a_{kl}}{x_v}(x)\biggr|\\
&\le f_2(\|A(x)\|) \gamma_1(x)^2 .
\end{align*}
Combining the bounds obtained in the last two steps, we have
\begin{equation}\label{eq3}
\begin{split}
\|\nabla^2 g(x)\| &\le \gamma_2(x)f_1(\|A(x)\|) \sqrt{\mathrm{rank}(A(x))} + \gamma_1^2(x) f_2(\|A(x)\|)\\
&= \eta_2(x).
\end{split}
\end{equation}
Finally, since $g$ is defined on a real domain, therefore $\nabla \real g = \real \nabla g$ and $\nabla^2 \real g = \real \nabla^2 g$. Thus, $\|\nabla \real g(x)\|\le \|\nabla g(x)\|$ and $\|\nabla^2 \real g(x)\|\le \|\nabla^2 g(x)\|$. The proof is now  completed by using \eqref{eq1}, \eqref{eq2}, and \eqref{eq3} to bound $\kappa_1$, $\kappa_0$, and $\kappa_2$ in Theorem \ref{poincare1}. The second part follows from the second part of Theorem \ref{poincare1}.
\end{proof}
\vskip.2in

\noindent {\bf Acknowledgment.} The author is grateful to Persi Diaconis for many helpful remarks and suggestions, and to the referee for suggesting a number of key improvements.

\vskip.8in


\begin{thebibliography}{99}
\bibitem{andersonzeitouni06a} {\sc Anderson, G.} and {\sc Zeitouni, O. (2006).} 
A CLT for a band matrix model. 
{\it Probab. Theory Related Fields} {\bf 134} no. 2, 283--338. 

\bibitem{andersonzeitouni06} {\sc Anderson, G.} and {\sc Zeitouni, O. (2006).} A law of large numbers for finite range dependent random matrices. {\it Preprint.} Available at {\tt http://arxiv.org/math/0609364}

\bibitem{avrambertsimas93} {\sc Avram, F.} and {\sc  Bertsimas, D. (1993).} On central limit theorems in geometrical probability. {\it Ann. Appl. Probab.} {\bf 3} no. 4, 1033--1046.


\bibitem{bai99} {\sc Bai, Z. D. (1999).} Methodologies in spectral analysis of large-dimensional random matrices, a review. {\it Statist. Sinica} {\bf 9} no. 3, 611--677.

\bibitem{baiyao03} {\sc Bai, Z.~D.} and {\sc Yao, J.-F. (2003).} On the convergence of the spectral empirical process of 
Wigner matrices. {\it Preprint.}


\bibitem{baisilverstein04} {\sc Bai, Z.~D.} and {\sc Silverstein, J.~W. (2004).} CLT for linear spectral statistics of
large-dimensional sample covariance matrices. {\it Ann. Probab.} {\bf 32} 533--605.


\bibitem{basor05} {\sc Basor, E.~L. (2005).} Toeplitz determinants, Fisher-Hartwig symbols, and random matrices. {\it Recent perspectives in random matrix theory and number theory,} 309--336, London Math. Soc. Lecture Note Ser., {\bf 322}, {\it Cambridge Univ. Press, Cambridge}.


\bibitem{basorchen05} {\sc Basor, E.} and {\sc Chen, Y. (2005).} Perturbed Hankel determinants. {\it J. Phys. A} {\bf 38} no. 47, 10101--10106.


\bibitem{bickelbreiman83} {\sc Bickel, P.~J.} and  {\sc Breiman, L. (1983).} Sums of functions of nearest neighbor distances, moment bounds, limit theorems and a goodness of fit test. {\it Ann. Probab.} {\bf 11} no. 1, 185--214. 


\bibitem{borovkovutev83} {\sc Borovkov, A.~A.} and {\sc Utev, S.~A. (1983).}
An inequality and a characterization of the normal distribution connected with it. 
{\it Teor. Veroyatnost. i Primenen.} {\bf 28} no. 2, 209--218.


\bibitem{bosesen07} {\sc Bose, A.} and {\sc Sen, A. (2007).} Spectral norm of random large dimensional noncentral Toeplitz and Hankel matrices. {\it Electron. Comm. Probab.} {\bf 12} 29--35 (electronic). 


\bibitem{boutet95} {\sc Boutet de Monvel, A., Pastur, L.} and {\sc Shcherbina, M. (1995).}  On the statistical mechanics
approach in the random matrix theory: Integrated density of states, {\it J. Statist. Phys.} {\bf 79}
 585--611.

\bibitem{brycdembojiang06} {\sc Bryc, W., Dembo, A.,} and {\sc Jiang, T. (2006).} Spectral measure of large random Hankel, Markov, and Toeplitz matrices. {\it Ann. Probab.} {\bf 34} no. 1, 1--38.


\bibitem{cabanalduvillard01} {\sc Cabanal-Duvillard, T. (2001).} Fluctuations de la loi empirique de grande matrices 
al\'eatoires. {\it Ann. Inst. H. Poincar\'e - Probab. Statist.} {\bf 37} 373--402. 

\bibitem{cacoullosetal94} {\sc Cacoullos, T.,  Papathanasiou, V.} and {\sc Utev, S.~A. (1994).} Variational inequalities with examples and an application to the central limit theorem. {\it Ann. Probab.} {\bf 22}  no. 3, 1607--1618.


\bibitem{chatterjeenormal1} {\sc Chatterjee, S. (2006).} A new method of normal approximation. To appear in {\it Ann. Probab.} Available at {\tt http://arxiv.org/abs/math/0611213}

\bibitem{chen88} {\sc Chen, L.~H.~Y. (1988).} The central limit theorem and Poincar\'e-type inequalities. {\it Ann. Probab.} {\bf 16} no. 1, 300--304.

\bibitem{chernoff81} {\sc Chernoff, H. (1981).} A note on an inequality involving the normal distribution. {\it Ann. Probab.} {\bf 9} no. 3, 533--535. 

\bibitem{costinlebowitz95} {\sc Costin, O.} and {\sc Lebowtz, J. (1995)}. Gaussian fluctuations in random matrices. {\it Phys. Review Letters} {\bf 75} 69--72.

\bibitem{diaconisshahshahani94} {\sc Diaconis, P.} and {\sc Shahshahani, M. (1994).} On the eigenvalues of random matrices: Studies in applied 
probability. {\it J. Appl. Probab.} {\bf 31A} 49--62. 

\bibitem{diaconisevans01} {\sc Diaconis, P.} and {\sc Evans, S.~N. (2001).} Linear functionals of eigenvalues of random
matrices. {\it Trans. Amer. Math. Soc.} {\bf 353} 2615--2633.

\bibitem{dumitriuedelman06} {\sc Dumitriu, I.} and {\sc Edelman, A. (2006).} Global spectrum fluctuations for the $\beta$-Hermite and $\beta$-Laguerre ensembles via matrix models. {\it J. Math. Phys.} {\bf 47} no. 6, 063302, 36 pp.




\bibitem{friedrich89} {\sc Friedrich, K.~O. (1989).} A Berry-Esseen bound for functions of independent random variables. {\it Ann. Statist.} {\bf 17} no. 1, 170--183. 


\bibitem{girko90} {\sc Girko, V.~L. (1990).} {\it Theory of Random Determinants.} Kluwer.


\bibitem{goldsteinreinert97} {\sc Goldstein, L.} and {\sc Reinert, G. (1997).} Stein's method and the zero bias transformation with application to simple random sampling. {\it Ann. Appl. Probab.} {\bf 7}  no. 4, 935--952.

\bibitem{guionnet02} {\sc Guionnet, A. (2002).} Large deviations upper bounds and central limit theorems for non-commutative functionals of Gaussian large random matrices. {\it Ann. Inst. H. Poincar\'e Probab. Statist.} {\bf 38} 341--384.




\bibitem{gz00} {\sc Guionnet, A.} and {\sc Zeitouni, O. (2000).} Concentration of the spectral measure for large matrices. {\it Electron. Comm. Probab.} {\bf 5}  119--136.


\bibitem{hachem06} {\sc Hachem, W., Khorunzhiy, O.,  Loubaton, P., Najim, J.,} and {\sc  Pastur, L. (2006).}
A New Approach for Capacity Analysis of Large Dimensional Multi-Antenna Channels. {\it Preprint.}

\bibitem{hachem07} {\sc Hachem, W., Loubaton, P.,} and {\sc Najim, J. (2007).} A CLT for Information-theoretic statistics of Gram random matrices with a given variance profile. {\it Preprint.}

\bibitem{hughesetal00} {\sc Hughes, C.~P., Keating, J.~P.,} and {\sc O'Connell, N. (2000).} On the characteristic
polynomial of a random unitary matrix. {\it Commun. Math. Physics} {\bf 220} 429--451.

\bibitem{israelson01} {\sc Israelson, S. (2001).} Asymptotic fuctuations of a particle system with singular interaction. {\it Stoch. Processes Appl.} {\bf 93} 25--56.

\bibitem{jiang06} {\sc Jiang, T. (2006).} Approximation of Haar Distributed Matrices and Limiting Distributions of Eigenvalues of Jacobi Ensembles. {\it Preprint.}

\bibitem{johansson97} {\sc Johansson, K. (1997).} On random matrices from the classical compact groups. {\it Ann. Math. (2)} {\bf 145} 519--545.


\bibitem{johansson98} {\sc Johansson, K. (1998).} On the fluctuation of eigenvalues of random Hermitian matrices. {\it Duke Math. J.} {\bf 91} 151--204.

\bibitem{johnstone06} {\sc Johnstone, I.~M. (2006).} High Dimensional Statistical Inference and
Random Matrices. To appear in {\it Proc. ICM 2006}.

\bibitem{jonsson82} {\sc Jonsson, D. (1982).} Some limit theorems for the eigenvalues of a sample covariance 
matrix. {\it J. Mult. Anal.} {\bf 12} 1--38.

\bibitem{keatingsnaith00} {\sc Keating, J.~P.} and {\sc Snaith, N.~C. (2000).} Random Matrix Theory and $\zeta(1/2+it)$. {\it Commun. Math. Physics} {\bf 214} 57--89.

\bibitem{kkp96} {\sc Khorunzhy, A.~M., Khoruzhenko, B.~A.,} and {\sc Pastur, L.~A. (1996).} Asymptotic properties of 
large random matrices with independent entries. {\it J. Math. Phys.} {\bf 37} 5033--5060.

\bibitem{ledoux01} {\sc Ledoux, M. (2001).} {\it The Concentration of Measure Phenomenon,} volume 89 of {\it Mathematical Surveys and Monographs}.
American Mathematical Society, Providence, RI.

\bibitem{meckes06} {\sc Meckes, M.~W. (2006).} On the spectral norm of a random Toeplitz matrix. {\it Preprint.} Available at {\tt http://arxiv.org/pdf/math/0703134}

\bibitem{mingospeicher06} {\sc Mingo, J.~A.} and {\sc Speicher, R. (2006).} Second order freeness and fluctuations of random matrices. I. Gaussian and Wishart matrices and cyclic Fock spaces. {\it J. Funct. Anal.}  {\bf 235} no. 1, 226--270. 

\bibitem{muckenhoupt72} {\sc Muckenhoupt, B. (1972).} Hardy's inequality with weights. {\it Studia Math.} {\bf 44} 31--38.

\bibitem{ridervirag06} {\sc Rider, B.} and {\sc Vir\'ag, B. (2006).} The noise in the circular law and the Gaussian free field. Available at {\tt http://arxiv.org/math.PR/0606663}

\bibitem{ridersilverstein06} {\sc Rider, B.} and {\sc Silverstein, J.~W. (2006).} Gaussian fluctuations for non-Hermitian random matrix ensembles. To appear in {\it Ann. Probab.}

\bibitem{ruschendorf85} {\sc R\"uschendorf, L. (1985).} Projections and iterative procedures. In {\it Multivariate Analysis VI} (P.~R. Krishnaiah, ed.) 485--493. North-Holland, Amsterdam.

\bibitem{sinaisoshnikov98} {\sc Sina\u\i, Ya.} and {\sc Soshnikov, A. (1998).} Central limit theorems for traces of large random matrices with independent entries. {\it Bol. Soc. Brasil. Mat.} {\bf 29} 1--24.

\bibitem{sinaisoshnikov98b} {\sc Sina\u\i, Ya.} and {\sc Soshnikov, A. (1998).} A refinement of Wigner's semicircle law in a neighborhood of the spectrum edge for random symmetric matrices. {\it Funct. Anal. Appl.} {\bf 32} no. 2, 114--131.

\bibitem{soshnikov02} {\sc Soshnikov, A. (2002).} Gaussian limits for determinantal random point fields. {\it Ann. Probab.} {\bf 28} 171--181.


\bibitem{stein72} {\sc Stein, C. (1972).} A bound for the error in the normal approximation to the distribution of a sum of dependent random variables.  {\it Proc. of the Sixth Berkeley Symp. on Math. Statist. and Probab., Vol. II: Probability theory. }  583--602. 

\bibitem{stein86} {\sc Stein, C. (1986).} {\it Approximate computation of expectations.} IMS Lecture Notes---Monograph Series, {\bf 7}. 


\bibitem{vanzwet84} {\sc van Zwet, W.~R. (1984).} A Berry-Esseen bound for symmetric statistics. {\it Z. Wahrsch. Verw. Gebiete} {\bf 66} no. 3, 425--440.


\bibitem{wieand02} {\sc Wieand, K. (2002).} Eigenvalue distributions of random unitary matrices. {\it Probab. Theory Related Fields} {\bf 123} 202--224.

\end{thebibliography}
\end{document}